\DeclareRobustCommand*\cal{\@fontswitch\relax\mathcal}
\newtheorem{thm}{Theorem}
\newtheorem{definition}{Definition}
\newtheorem{rem}[definition]{Remark}
\newtheorem{exam}[definition]{Example}
\newtheorem{prop}[definition]{Proposition}
\newtheorem{lem}[definition]{Lemma}
\newtheorem{cor}[definition]{Corollary}
\newtheorem{fact}[definition]{Fact}
\newtheorem{quest}{Question}
\newcommand{\br}{\overline}
\newcommand{\A}{\mathcal A}
\newcommand{\B}{\mathcal B}
\newcommand{\C}{\mathcal C}
\newcommand{\Mod}{\mathop{\rm Mod}}
\newcommand{\RMod}{R\textrm{-Mod}}
\newcommand{\Rmod}{R\textrm{-mod}}
\newcommand{\ZMod}{\mathbb{Z}\textrm{-Mod}}
\newcommand{\RFlat}{R\textrm{-Flat}}
\newcommand{\RProj}{R\textrm{-Proj}}
\newcommand{\RFree}{R\textrm{-Free}}
\newcommand{\Add}{\mathop{{\rm Add}}\nolimits}
\newcommand{\add}{\mathop{{\rm add}}\nolimits}
\newcommand{\Ind}{\mathop{{\rm Ind}}\nolimits}
\newcommand{\Th}{\mathop{{\rm Th}}\nolimits}
\newcommand{\n}{\mathop{{\rm n}}\nolimits}
\renewcommand{\phi}{\varphi}
\renewcommand{\prec}{\preceq}
\begin{document}
\title{Free algebras, universal models and Bass modules}
\author{Anand Pillay and Philipp Rothmaler}
\subjclass[2020]{Primary 	08B20, 	16D40; Secondary 	03C05, 	03C35, 	03C60, 13L05.}
\date{\today \hspace{1em}}
\maketitle
\begin{abstract} We investigate the question of when free structures of infinite rank (in a variety) possess model-theoretic properties like categoricity in higher power, saturation, or universality. Concentrating on left $R$-modules we show, among other things, that the free module of infinite rank $R^{(\kappa)}$ purely embeds every $\kappa$-generated flat left $R$-module iff $R$ is left perfect. Using a Bass module corresponding to a descending chain of principal right ideals, we construct a model of the theory $T$ of $R^{(\kappa)}$ whose projectivity is equivalent to left perfectness, which allows to add a "stronger" equivalent condition: $R^{(\kappa)}$ purely (equivalently, elementarily) embeds every $\kappa$-generated flat left $R$-module which is a model of $T$.

In addition,  we extend the model-theoretic construction of this Bass module to arbitrary descending chains of pp formulas, resulting in a `Bass theory' of pure-projective modules. We put this new theory to use by reproving an old result of Daniel Simson about pure-semisimple rings and Mittag-Leffler modules.

\end{abstract}

\footnotetext{The first author was partially supported by NSF grants DMS-1665035,
DMS-1760212, and DMS-2054271.}

\section{Introduction}
A structure $U$ is called \texttt{universal} in a class $\cal K$ of structures of the same similarity type if every member of  $\cal K$ of cardinality at most that of $U$ is embeddable in $U$. 
Ever since seminal work of Roland Fra\"\i ss\'e and Bjarni J\'onsson in the 1950s and Morley and Vaught's work of the early 1960s universal structures have been part of both universal algebra and model theory. The latter work's greatest achievement is the development of a powerful machinery for the case where $\cal K$ is the class of models of a complete elementary theory $T$, equivalently, the class of structures elementarily equivalent to a single structure. Here one can strengthen the concept to elementary embeddings---the corresponding type of universal structure we call \texttt{elementarily universal} or  \texttt{$\prec$-universal} model (of $T$), see \cite{H} for references. The quest for universal members of classes of abelian groups and modules has spawned an active area of research starting with Paul Eklof's study of the early 1970s, often modified by an intermediate kind of embeddability, namely pure embeddability. The corresponding type of universal structures are \texttt{pure-universal} abelian groups or modules. For recent progress in this direction  see \cite{HM}, \cite{KM}, and \cite{M-A}.

All that work is concerned with the existence of universal structures in whatever classes under consideration. We here take a different approach. Instead of looking at the existence of universal structures we ask when specific structures are universal. These specific structures we take to be the free structures of infinite rank in a variety. 
More concretely, let  $\cal V$ be a variety, i.e., an equationally axiomatized class of structures of given similarity type, and let $\cal K$ be a subclass of $\cal V$ containing all the free members of $\cal V$. We then ask when the free structures of infinite rank are universal for $\cal K$. This is directly inspired by corresponding work \cite{B-S}---and its follow-ups \cite{P-S} and \cite{KP}---about saturation of free structures.

The most interesting answers we have are for $\cal V=\RMod$, the variety of all, say left, modules over an arbitrary associative ring $R$,  and two particular classes $\cal K$, that of all flat modules in $\RMod$, denoted $\RFlat$, and the class $\cal M$ of all modules elementarily equivalent to a free module of infinite rank, which is the class of models of a complete theory $T$, for all the free left $R$-modules of infinite rank are elementarily equivalent. \\

\noindent
\textbf{Theorem \ref{perfect}.} The following are equivalent for any ring $R$.
\vspace{-.1em}
\begin{enumerate}[\rm (i)]
\item $R$ is left perfect.
\item The free left $R$-module of some (any) infinite rank $\kappa$ is pure-universal among $\kappa$-generated flat left $R$-modules.
\item The free left $R$-module of some (any) infinite rank $\kappa$ is $\prec$--universal among $\kappa$-generated flat members of $\cal M$ (i.e., among $\kappa$-generated flat models of $T$).\\
\end{enumerate}

Note the third part, which says that as for left perfectness it suffices to examine the models of $T$. Bass' original description of left perfectness uses a test module, now called \texttt{Bass module}, denote it by $B$ accordingly, whose projectivity is equivalent to left perfectness. The novelty here is to use the same module $B$ to construct a model of\/ $T$, whose projectivity, again, entails left perfectness of the ring:\\

\noindent
\textbf{Lemma \ref{flatmodel}.} $R$ is left perfect if and only if every (countably generated)  flat model of $T$ is projective.\\

(Beware that the proofs of most statements about $T$, in particular that of of Thm.\  \ref{perfect} (in \S \ref{modules}),  therefore depend on the later \S \ref{Bass1}.)

Another novelty is a model-theoretic generalization of Bass' construction with respect to \emph{any} descending chain of pp formulas, Thm.\,\ref{pure} in \S \ref{purproj}. 
 Bass' original $B$ was based on a descending chain of principal right ideals.

Curiously, this generalization yields, as a special case, a  model-theoretic proof of an old algebraic result of Daniel Simson \cite{Sim}.\\

\noindent
\textbf{Corollary \ref {Sim}.}
The following are equivalent for any ring $R$.
\vspace{-.1em}
\begin{enumerate}[\rm (i)]
 \item $R$ is left pure-semisimple, i.e., every left $R$-module is pure-projective.
 \item Every (left) $R$-module is $\Sigma$-pure-injective (i.e., totally transcendental).
 \item Every  (left) $R$-module is Mittag-Leffler. \\
\end{enumerate}

To set the stage, we start the paper  with a brief discussion of the model-theoretic implications around "categorical $\implies$ saturated $\implies$ universal" in the context of an arbitrary variety $\cal V$, \S\ref{V}, and juxtapose them  with the corresponding properties of $R$ when ${\cal V} = \RMod$, the variety of all left $R$-modules, \S\ref{modules}. The last item in the sequence of these equivalences is aforementioned Thm.\ \ref{perfect}.

The current work includes and supercedes  \cite{PiRo}.

\section{Preliminaries}
\subsection{General} Throughout, $L$ is  an elementary (i.e., finitary first-order) language of arbitrary size.  $M \equiv N$ means that the $L$-structures $M$ and $N$ are \texttt{elementarily equivalent}, i.e., satisfy the same $L$-sentences. A class  $\cal K$ is \texttt{elementarily closed} if $M \equiv N\in \cal K$ implies $M\in\cal K$.  An \texttt{elementary class} is a class of $L$-structures axiomatized by a set of $L$-sentences. An $L$-theory is \texttt{complete} iff all of its models are elementarily equivalent. 

$M \prec N$ means that $N$ is an \texttt{elementary extension}  of $M$, i.e., $(M, m)_{a\in M} \equiv (N, m)_{a\in M}$, that is elementary equivalence in the language $L$ with constants (names) adjoined for all elements in $M$.

A structure $M$ is \texttt{saturated}, resp., \texttt{weakly saturated}, if all types in finitely many variables over subsets of $M$ of cardinality $<|M|$, resp., over the empty set,  are realized.  

 $\kappa, \lambda, \ldots$ always stand for cardinal numbers. We use  $\omega$ and $\aleph_0$ interchangeably.  
 By \texttt{higher power} or \texttt{higher cardinal} we mean a cardinal $>|L|$. 
 
 Given a class $\cal K$ and a (possibly finite) cardinal $\kappa$, denote by $\cal K^{\geq\kappa}$, $\cal K^{\leq\kappa}$, or $\cal K^{=\kappa}$ the class of members  of $\cal K$ of size $\geq\kappa$, ${\leq\kappa}$, or exactly ${\kappa}$, resp. When $\kappa=\aleph_0$,  instead 
of $\cal K^{\geq \kappa}$ we sometimes write $\cal K^\infty$, the class of infinite members of  $\cal K$. The class $\cal K$ is called \texttt{categorical in $\kappa$} or \texttt{$\kappa$-categorical}, if all members of $\cal K^{=\kappa}$ are isomorphic. 

For \texttt{free} structures  (in a class $\cal K$) consult any universal algebra text or \cite[\S 9.2, p.425]{H}. In the context of a given variety $\cal V$, we let $\cal F$ denote the class of all free structures in $\cal V$. Note that this class is (not empty and) categorical in all higher powers for the simple reason that, above $|L|$, the rank of a free structure is equal to its cardinality.

 \subsection{Modules.} 
 \emph{Module} means (unitary) left module (unless stated otherwise) over an associative ring $R$ with $1$.  Our overall references are \cite{P1} and  \cite{P2}.
 
 A module is  \texttt{free}  if it is of the form $_RR^{(I)}$ for some set $I$. The cardinality of $I$ is the rank of that module. The class of free (left) $R$-modules is denoted by $\RFree$ or  $\cal F$ again when the context is clear. 

 We abbreviate \emph{finitely generated} by \texttt{fg} and \emph{finitely presented} by \texttt{fp}. $\RMod$ is the category of left $R$-modules, and $\Rmod$ the full subcategory of fp objects. Given a class $\A$ of modules, $\Add \A$ denotes the closure of $\A$ under direct summands and arbitrary direct  sums, while  $\add \A$ is used when only finite direct sums are allowed. Further, $\lim\A$ stands for the class of all direct limits (=colimits) of members of $\A$.
 Given a module $M$ and a set $I$, $M^I$  denotes the direct power (= product of $|I|$ copies of $M$), while $M^{(I)}$  stands for the weak direct power (=coproduct or  direct sum of $|I|$ copies of $M$). Thus,  $|M^I|=|M|^{|I|}$ and if $M$ and $I$ are infinite,  $|M^{(I)}|=|M|+|I|$.
  
\textbf{The language.} When it is about $R$-modules, $L$ is the customary language   of $R$-modules with $0, +$ and unary function symbols for every ring element, as in \cite{P1} or \cite{P2}. We are going to recall a few pertinent facts about the use of this language and refer the reader to these two text for any more detail.

 First of all, $|L| =   |R|+\aleph_0$, so for modules \emph{higher power}  means $> |R|+\aleph_0$. The most important syntactic object in this language is what is called a \texttt{pp} (or \texttt{positive primitive}) formula, which is an existentially quantified finite conjunction of $R$-linear equations. We mostly need only unary pp formulas, i.e., with one free variable. Let $\phi(x)$ be such a formula. Then $\phi(M)$, the solution set of $\phi$ in $M$, is the projection of the solution set of the original finite system of equations onto that one free variable place. Being additive, $\phi(M)$ therefore forms a subgroup of the additive group of $M$, a so-called \texttt{pp subgroup} of $M$. Every unary pp formula $\phi$ can be thought of as a functor  into abelian groups: $\phi: \RMod \to \ZMod$. Note that these are the same as Zimmermann's finitary p-functors of \cite{Zim}, whose images are his \emph{endlich matrizielle Untergruppen}, i.e.\, finitary matrix subgroups, which is thus synonymous with what we call pp subgroups. Cf.\,also \cite[\S 1.5]{Fa}.
 
 Every $L$-formula is, uniformly in $\RMod$, equivalent to a boolean combination of pp formulas (in the same variables), a fact known as \texttt{pp-elimination}. It readily implies the following.
 
\begin{fact}
 If $I$ and $J$ are infinite, $M^I\equiv M^{(I)}\equiv M^{(J)}\equiv M^J$, for any module $M$. In particular,  $_RR^I\in\cal M$. (Further, if $R$ is infinite, then $_RR^R\in\cal M$.) 
\end{fact}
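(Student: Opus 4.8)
The plan is to deduce everything from the Baur--Monk analysis that pp-elimination furnishes. Recall its standard consequence (see \cite{P1}, \cite{P2}): for modules, elementary equivalence is governed entirely by the \emph{Baur--Monk invariants}. That is, for each pair of pp formulas $\psi\to\phi$ in a common tuple of variables one records the index $[\phi(M):\psi(M)]$ of the pp subgroups as an element of $\{1,2,3,\dots,\infty\}$ (all infinite values identified), and then $M\equiv N$ holds iff these indices agree for $M$ and $N$ across all such pp pairs. So the whole statement reduces to computing these invariants for the four modules $M^I$, $M^{(I)}$, $M^{(J)}$, $M^J$ and checking they coincide.

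The key input is that every pp formula commutes with arbitrary direct products and direct sums: $\phi(M^I)=\phi(M)^I$ and $\phi(M^{(I)})=\phi(M)^{(I)}$, as subgroups of $(M^I)^n=(M^n)^I$, resp.\ $(M^{(I)})^n=(M^n)^{(I)}$. The product identity is immediate, since a tuple solves a finite system of $R$-linear equations in a product iff it does so in every coordinate, and existential witnesses can be assembled coordinatewise. For the direct sum one argues the same way, noting in addition that an element of $M^{(I)}$ has finite support; off that support one takes the existential witnesses to be $0$ (using $0\in\phi(0)$ always), so the assembled witness again lies in $M^{(I)}$.

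Granting this and using exactness of $(-)^I$ and $(-)^{(I)}$, for any pp pair $\psi\to\phi$ one obtains
\[
\phi(M^{(I)})/\psi(M^{(I)})\cong\bigl(\phi(M)/\psi(M)\bigr)^{(I)},\qquad \phi(M^{I})/\psi(M^{I})\cong\bigl(\phi(M)/\psi(M)\bigr)^{I}.
\]
The dichotomy is then transparent: if $\phi(M)=\psi(M)$ the quotient is trivial and the index is $1$; if $\phi(M)\neq\psi(M)$ then, because $I$ and $J$ are infinite, each of these groups is infinite and the index is $\infty$. Hence all four of $M^I,M^{(I)},M^{(J)},M^J$ carry exactly the same Baur--Monk invariants, so they are elementarily equivalent. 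I expect the only point needing real care to be the direct-sum identity $\phi(M^{(I)})=\phi(M)^{(I)}$ with its finite-support witness argument; the product case and the invariant bookkeeping are routine.

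For the final assertions I would specialize to $M={}_RR$. Then $R^I\equiv R^{(I)}$ by the first part, and $R^{(I)}$ is free of infinite rank (as $I$ is infinite), hence lies in $\cal M$; therefore $R^I\in\cal M$ as well. The parenthetical case is merely $I=R$: when $R$ is infinite the index set $R$ is infinite, so the same argument gives $R^R\in\cal M$.
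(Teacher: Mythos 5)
Your proposal is correct and is exactly the argument the paper has in mind: the paper derives this Fact from pp-elimination (Baur--Monk), i.e.\ from the fact that elementary equivalence of modules is determined by the pp-pair invariants, which you compute via the commutation of pp formulas with products and direct sums and the resulting $1$-versus-$\infty$ dichotomy. The finite-support witness point and the specialization $M={}_RR$ are handled correctly, so there is nothing to add.
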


The $L$-theory of free modules of infinite rank is denoted by $T$. By  this  fact, all free modules of infinite rank are elementarily equivalent, whence $T$ is complete, cf.\, \cite[Prop.\,7]{SE}. 

\textbf{Free realizations.} Recall that a module is finitely presented iff it is finitely generated and finitely related. The relations, for any given f.p.\ module, can be expressed as a finite system of $R$-linear equations (a conjunction of atomic formulas) that the generators must satisfy. But it is not hard to see that one can do something similar for any tuple in a f.p.\ module if one allows arbitrary pp formulas instead. This  is handily expressed by the following concept, cf.\,Prest's texts. A \texttt{free realization} of a pp formula $\phi(\bar{x})$ is a pair $(A, \bar a)$ with   $A$ a f.p.\ module and a tuple $\bar a$ of the same length as $\bar x$ therein such that (a) $\bar a\in\phi(A)$ and (b) if $B$ is \emph{any} module and $\bar b\in\phi(B)$, then there is a homomorphism $A\to B$ sending $\bar a \to \bar b$, for which we conveniently write $(A, \bar a) \to (B, \bar b)$. Every pp formula has such a free realization and, conversely, every such pair  $(A, \bar a)$ is the free realization of some such pp formula.

\textbf{Purity.} A submodule $M$ of $N$ is called \texttt{pure} (in $N$) if $\phi(N)\cap M\subseteq \phi(M)$ (and hence equal). An embedding or monomorphism is called \texttt{pure} if it is an isomorphism onto a pure submodule. Pure embeddings are like elementary embeddings with only taking pp formulas into account. But by pp elimination one has:

\begin{fact}
 Elementary embeddings of modules are precisely the pure embeddings of elementarily equivalent modules.
\end{fact}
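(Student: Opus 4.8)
The plan is to read the statement as the biconditional that an embedding $f\colon M\to N$ of modules is elementary exactly when it is pure and $M\equiv N$, and to prove the two directions separately, letting pp-elimination do the heavy lifting.

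First I would dispatch the forward direction, which is immediate. If $f$ is elementary, then testing the defining condition on sentences (the empty tuple) yields $M\equiv N$, while testing it on a single pp formula $\phi(\bar x)$ and a tuple $\bar a$ from $M$ yields $M\models\phi(\bar a)\iff N\models\phi(f\bar a)$. Identifying $M$ with its image, the right-to-left half of this last equivalence is precisely the inclusion $\phi(N)\cap M\subseteq\phi(M)$ that defines purity. So an elementary embedding is automatically a pure embedding between elementarily equivalent modules.

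For the converse I would assume $f$ pure with $M\equiv N$ and aim to show $M\models\theta(\bar a)\iff N\models\theta(f\bar a)$ for every $L$-formula $\theta(\bar x)$ and every tuple $\bar a$ from $M$. The strategy is to reduce an arbitrary $\theta$ to pieces that the two hypotheses can each handle. By pp-elimination in its full (Baur--Monk) form, $\theta$ is equivalent, uniformly across $\RMod$, to a boolean combination of pp formulas $\phi_i(\bar x)$ together with parameter-free invariant sentences $\sigma_j$, each $\sigma_j$ asserting that the index of one pp subgroup in another is at least some fixed integer. Whether such a boolean combination holds at a tuple depends only on the truth values of its constituents. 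Purity gives $M\models\phi_i(\bar a)\iff N\models\phi_i(f\bar a)$ for each $i$, while $M\equiv N$ gives $M\models\sigma_j\iff N\models\sigma_j$ for each $j$; substituting these matching truth values into the boolean combination produces $M\models\theta(\bar a)\iff N\models\theta(f\bar a)$, as required.

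The one point that needs care---and where I expect a careless argument to slip---is that the decomposition really must be allowed to carry the invariant sentences $\sigma_j$, not merely pp formulas in $\bar x$. Purity by itself does not force elementarity: the split (hence pure) embedding ${}_{\mathbb Z}\mathbb Z\hookrightarrow{}_{\mathbb Z}\mathbb Z^2$ is not elementary, since the invariant $|M/2M|$ equals $2$ on the left and $4$ on the right. It is exactly the hypothesis $M\equiv N$ that neutralizes the $\sigma_j$, and recognizing that the two hypotheses correspond precisely to the two kinds of constituents is the crux of the Fact.
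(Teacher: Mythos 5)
Your proof is correct and is essentially the paper's own argument: the Fact appears there as an immediate consequence of pp-elimination, which is exactly the reduction you carry out (forward direction by specializing to sentences and pp formulas, converse by decomposing an arbitrary formula). Your extra care with the invariant sentences $\sigma_j$ is well placed---the paper's one-line formulation of pp-elimination (``boolean combination of pp formulas in the same variables'') suppresses them, and as your example of the split embedding $\mathbb{Z}\hookrightarrow\mathbb{Z}^2$ shows, those sentences are precisely what the hypothesis $M\equiv N$ is needed to neutralize.
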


An epimorphism is \texttt{pure} if its kernel is a pure submodule. So, in a short exact sequence, the monomorphim is pure if and only if the epimorphism is. Such sequences are called \texttt{pure-exact}. The pure-exact sequences are precisely the direct limits of split-exact sequences (which hints at the importance of purity).

A module $M$ is called \texttt{pure-injective}  if every pure-exact sequence $0\to M\to N \to P\to 0$ splits. It is interesting to note that pure injectivity is equivalent to what is known as \texttt{algebraic compactness} or \texttt{atomic compactness}, for which see \cite[\S 10.7]{H}. 

A module $P$ is called \texttt{pure-projective} if every pure-exact sequence $0\to M\to N \to P\to 0$ splits.
Since every module is a pure-epic image of a direct sum of f.p.\ modules, a module is pure-projective if and only if it is a direct summand of a direct sum of f.p.\ modules.

\textbf{Projectivity and flatness.}  We obtain the classical notions of \texttt{injective} and \texttt{projective} modules if we require the above splitting for \emph{all} short exact sequences. Correspondingly, as every module is an epimorphic image of a free module, a module is projective if and only if it is a direct summand of a free module. 
 Denote by $\RProj$ or simply $\cal P$ the class of projective (left) $R$-modules. 
 
Closing the class $\cal P=\RProj$ off under direct limit, we get the class $\RFlat$ (also denoted $_R\flat$) of  \texttt{flat} modules: $\lim \cal F\subseteq  \lim\cal P = \RFlat = \lim(\RFlat)$. By Lazard's Theorem, which says that every flat module is a direct limit of free modules of finite rank, all these are equal. 
 While  a module is projective iff every short exact sequence ending in it splits, a module is flat iff every short exact sequence ending in it is pure. (Note, split-exact sequences are pure-exact, but not conversely.) 
 See  \cite{L}, \cite{P1}, and \cite{P2} for more detail.
 
 By Kaplansky's Theorem, every (pure-) projective is a direct sum of countably generated (pure-) projectives.
 
 \textbf{Mittag-Leffler modules.} Just as flat modules have better closure properties than projectives, there is an important class containing all pure-projectives with better properties than these. They are the \texttt{Mittag-Leffler} modules, which are the modules that are covered by a system of pure-projective pure submodules. These can, as a matter of fact, be taken countably generated, which shows that countably generated Mittag-Leffler modules \emph{are} pure-projective, and so are countably generated pure submodules of a Mittag-Leffler module.
 A special case is: countably generated pure submodules of pure-projectives are pure-projective---without the countable generation all one can say is that they are Mittag-Leffler.
 Because of  flat+pure-projective=projective, we have  the same in the flat case: countably generated pure submodules of projectives are projective. However,
 flat Mittag-Leffler modules need not be pure-projective. 
  
 Mittag-Leffler modules  were introduced in \cite{RG} and given a model-theoretic treatment in \cite{Rot1} (see also \cite{Rot2} or \cite{P2}), the main result of which is the following  characterization (which can be taken as a definition). A module is Mittag-Leffler if and only if 
 every tuple  realizes a \texttt{finitely generated} pp type. What this means for a module $M$ is this:  for every tuple $\bar a$ in $M$ there is a pp formula $\phi(\bar x)$ (of same arity) which implies all  pp formulas that $\bar a$ satisfies in $M$.  Beware though: implication is intended in the entire module category, i.e. given a pp formula  $\psi$ that $\bar a$ satisfies in $M$, we require that  $\phi$ imply $\psi$ in \emph{every} module, i.e., $\phi(N)\subseteq\psi(N)$ for every $N\in\RMod$. We  write $\phi\leq\psi$ for this. Finite generation means that there be one pp formula in the type so implying the entire pp type. This characterization entails many of the properties Mittag-Leffler modules were shown to have in \cite{RG}. In particular, trivially pure submodules of Mittag-Leffler modules are Mittag-Leffler. Besides, it is  the key to the simple proof of the Bass--Bj\"ork result in \cite{PR} to be discussed  and generalized below.
 

 \textbf{Stability and $\Sigma$-pure-injectivity.} A module is \texttt{$\Sigma$-pure-injective} if some/all of its infinite weak powers are pure-injective. This is known to be the case if and only if the module (equivalently, any of its weak direct powers) satisfies the dcc on all pp subgroups. (Besides the usual, see also \cite[\S 1.7]{Fa}.)
 
 There is the model-theoretic concept of \texttt{totally transcendental} complete theory (and, by extension, structure), which plays an important role  in general stability theory. We mention only its characterization for modules. (The complete theory of) a  module  is totally transcendental if and only if it is $\Sigma$-pure-injective. This makes sense, as the dcc on pp subgroups is certainly preserved under elementary equivalence. The following fact is the only stability-theoretic result we use (and in fact only in modules, and only once, in Cor.\,\ref{R3}).
 
\begin{fact}\label{tt}(See e.g.\,\cite[10.2.7+8]{H}.)
Totally transcendental theories have saturated models in all higher cardinalities.
\end{fact}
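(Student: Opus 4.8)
The plan is to derive this from two standard pillars of stability theory, both obtained via Morley rank. Recall that here $|T|=|L|$, so every higher cardinal $\lambda>|L|$ in particular satisfies $\lambda\ge|T|$, and it is enough to produce, for each such $\lambda$, a saturated model of $T$ of cardinality $\lambda$. The first pillar is that a totally transcendental $T$ is $\lambda$-stable for every $\lambda\ge|T|$: since total transcendence says precisely that every formula carries an ordinal-valued Morley rank $\mathrm{RM}$, each $p\in S_n(A)$ has a rank $\mathrm{RM}(p)$ and contains a formula $\phi_p$ over $A$ of that same rank and of Morley degree $1$. As a degree-one formula of rank $\alpha$ belongs to a unique complete type over $A$ of rank $\alpha$, the assignment $p\mapsto\phi_p$ is injective, whence $|S_n(A)|$ is at most the number of formulas over $A$, namely $|A|+|T|=|A|$. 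In particular $T$ is superstable.

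For $\lambda$ regular I would then run the usual elementary-chain construction: build a continuous elementary chain $(M_i)_{i<\lambda}$ with $|M_0|=|T|$, each $M_i\prec M_{i+1}$, where $M_{i+1}$ realizes every complete type in finitely many variables over $M_i$. Stability bounds $|S(M_i)|\le|M_i|$, so each $M_i$ stays of size $<\lambda$, while realizing a nonalgebraic type at every successor forces strict growth across the $\lambda$ steps; hence $M=\bigcup_{i<\lambda}M_i$ has cardinality exactly $\lambda$. Regularity of $\lambda$ places any parameter set $B\subseteq M$ with $|B|<\lambda$ inside a single $M_i$, and any type over $B$ extends to a complete type over $M_i$ already realized in $M_{i+1}\subseteq M$; so $M$ is $\lambda$-saturated, i.e.\ saturated.

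The genuinely delicate case, and the one I expect to be the main obstacle, is $\lambda$ singular, where the final-limit cardinality bookkeeping breaks down and a set $B$ of size $<\lambda$ need no longer lie in one member of the chain. Here I would instead take an increasing elementary chain of \emph{saturated} models $M_j$ of regular cardinalities $\lambda_j\nearrow\lambda$ (supplied by the regular case) and form $M=\bigcup_j M_j$, of size $\lambda$. The crux is to show $M$ is $\lambda$-saturated, and this is exactly where superstability is indispensable: a type $p$ over $B$ with $|B|<\lambda$ does not fork over some $B_0\subseteq B$ of size $\le|T|$, which lies in a single $M_j$; realizing $p\restriction B_0$ in that (sufficiently saturated) $M_j$ and transporting it by nonforking and stationarity yields a realization of $p$ in $M$. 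Turning this into the clean lemma that a cofinal union of $\lambda_j$-saturated models is $\lambda$-saturated for superstable $T$ is the technical heart; once Morley rank is in place the first two steps are essentially bookkeeping, so the real work sits in this transfer.
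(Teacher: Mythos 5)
First, a point of comparison: the paper does not prove this Fact at all --- it is quoted from Hodges \cite[10.2.7+8]{H} --- so your proposal can only be measured against the standard literature argument, and your outline (Morley rank gives $\lambda$-stability for all $\lambda\ge|T|$; elementary chain for regular $\lambda$; union of saturated models for singular $\lambda$) is indeed that argument. But two of your steps have genuine problems. In the stability count, it is \emph{false} that every $p\in S_n(A)$ contains a formula over $A$ of rank $\mathrm{RM}(p)$ and Morley degree $1$: in the theory of an equivalence relation with exactly two infinite classes, the unique nonalgebraic $1$-type over $\emptyset$ contains only rank-$1$ formulas of degree $2$, since no formula over $\emptyset$ separates the classes. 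The standard repair is to pick $\phi_p\in p$ with lexicographically minimal pair $(\mathrm{RM},\mathrm{dM})$; if $\phi_p=\phi_q$ for $p\ne q$, a separating formula $\psi$ gives $\mathrm{dM}(\phi_p)=\mathrm{dM}(\phi_p\wedge\psi)+\mathrm{dM}(\phi_p\wedge\neg\psi)$, contradicting minimality, so $p\mapsto\phi_p$ is still injective and your bound $|S_n(A)|\le|A|+|T|$ survives.

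The more serious gap is the singular case, which you yourself defer as ``the technical heart'' and never prove; as written it also contains an error that would sink it. (a) The bound $|B_0|\le|T|$ is not good enough: if $\mathrm{cf}(\lambda)=\omega$ and $|T|>\aleph_0$, a subset of size $|T|$ of the union need not lie in any single $M_j$, which is exactly where you use it. What saves the argument is that superstability (a fortiori total transcendence) gives a \emph{finite} $B_0$, and finite sets do lie in a single $M_j$; one must then also pass to a later index $j$ with $\lambda_j>|B|$ before invoking saturation of $M_j$. (b) ``Transporting it by nonforking and stationarity'' is not available as stated: $p|_{B_0}$ is in general not stationary (same two-classes example with $B_0=\emptyset$: the unique type over $\emptyset$ has two nonforking extensions to any model meeting both classes), so realizing $p|_{B_0}$ in $M_j$ does not pin down $p$. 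One needs strong types via the finite equivalence relation theorem: find $a\in M_j$ realizing $\mathrm{stp}(c/B_0)$ for some $c\models p$ such that moreover $a$ is independent from $B$ over $B_0$; that last requirement is the crux, and is usually met by building inside $M_j$ a $B_0$-independent family of $(|B|+|T|)^+$ realizations of the strong type and quoting the lemma that at most $|B|+|T|$ members of an independent family can fork with $B$. Until this is carried out, your proposal establishes the Fact only for regular $\lambda$. (Incidentally, for the modules this paper actually needs, there is a shortcut avoiding all of this: totally transcendental models are $\Sigma$-pure-injective, and the paper's own argument in Corollary \ref{R3} shows how pure-injectivity transfers saturation along elementary embeddings, via \cite[Prop.\,6.33]{P1}.)
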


Another important stability-theoretic concept is that of \texttt{superstability} of a complete theory. Since we need this only for modules, and only briefly so, note that a module $M$ is \texttt{superstable} if and only if it has the dcc on pp subgroups with infinite factors, i.e., iff there is no infinite descending chain $\phi_0(M)\supset \phi_1(M)\supset \phi_2(M)\supset \ldots$ with all $\phi_i(M)/\phi_{i+1}(M)$ infinite. This is less familiar a condition algebraically, however, the following is easy to see, cf.\, \cite{P1} or \cite{H}.

\begin{fact}\label{sstt}
 An infinite power $M^{(\omega)}$ of a module is superstable iff it (equivalently, $M$)  is totally transcendental (that is, $\Sigma$-pure-injective).
\end{fact}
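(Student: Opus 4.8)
The plan is to exploit the fact that pp subgroups commute with direct sums, so that passing to the infinite power $M^{(\omega)}$ turns every \emph{proper} inclusion of pp subgroups into one with an \emph{infinite} factor. This collapses the superstability condition onto the formally stronger total-transcendence condition.

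First I would record that for every pp formula $\phi$ one has $\phi(M^{(\omega)})=\phi(M)^{(\omega)}$, since pp formulas, being positive primitive, commute with direct sums (indeed with direct limits). Consequently, whenever $\psi\leq\phi$ (i.e.\ $\psi(N)\subseteq\phi(N)$ for all $N\in\RMod$), the factor group satisfies $\phi(M^{(\omega)})/\psi(M^{(\omega)})\iso\bigl(\phi(M)/\psi(M)\bigr)^{(\omega)}$. This last group is trivial if $\phi(M)=\psi(M)$ and infinite otherwise, since an infinite direct sum of copies of any nontrivial abelian group is infinite.

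The upshot is that in $M^{(\omega)}$ \emph{every} strict inclusion of pp subgroups has an infinite factor. Hence any strictly descending infinite chain of pp subgroups of $M^{(\omega)}$ is automatically a chain all of whose factors are infinite. Therefore the two conditions coincide for $M^{(\omega)}$: the dcc on \emph{all} pp subgroups (total transcendence) and the dcc on pp subgroups \emph{with infinite factors} (superstability). One implication is trivial, as dcc on all pp subgroups entails dcc on those with infinite factors; for the converse, a strictly descending chain witnessing the failure of the dcc yields, by the infinite-factor observation, a chain contradicting superstability.

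It remains to pass from $M^{(\omega)}$ back to $M$. Total transcendence equals $\Sigma$-pure-injectivity, which by the characterization recalled above is precisely the dcc on pp subgroups, a condition that the identity $\phi(M^{(\omega)})=\phi(M)^{(\omega)}$ transfers between $M$ and $M^{(\omega)}$ (a proper inclusion in one corresponds to a proper inclusion in the other). Thus $M^{(\omega)}$ is totally transcendental iff $M$ is, and chaining the equivalences gives $M^{(\omega)}$ superstable $\iff$ $M^{(\omega)}$ totally transcendental $\iff$ $M$ totally transcendental. There is essentially no serious obstacle here; the one substantive point, and the crux of why the infinite power is taken, is the observation that a proper pp-inclusion becomes one with an infinite factor after passing to $M^{(\omega)}$.
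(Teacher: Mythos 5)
Your proof is correct. The paper states this as a Fact without proof (citing Prest and Hodges, with the remark that it is ``easy to see'' from the recalled characterizations), and your argument --- pp subgroups commute with direct sums, so every proper inclusion of pp subgroups of $M^{(\omega)}$ has an infinite factor, which collapses the dcc-with-infinite-factors (superstability) onto the dcc on all pp subgroups ($\Sigma$-pure-injectivity, transferred between $M$ and $M^{(\omega)}$ by the same identity $\phi(M^{(\omega)})=\phi(M)^{(\omega)}$) --- is exactly the standard argument the paper is alluding to.
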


 \textbf{Definable subcategories}---an unfortunate term that sticks---are (full subcategories of $\RMod$ on)  classes of modules closed under product, direct limit and pure substructure. These are precisely the classes of modules that are axiomatized by implications of pp formulas, \cite[Thm.\,3.4.7]{P2}. An intersection of a set of those is a definable subcategory again, which allows us to define the definable subcategory  \texttt{generated by a class} $\cal K$ as the smallest  definable subcategory (of $\RMod$) containing $\cal K$.
 
 \subsection{Rings} $R$ is always an associate ring with $1$ and Jacobson radical $J$. Following H.\,Bass, $R$ is \texttt{left perfect} if every flat (left) $R$-module is projective. Bass proved that this is equivalent to the dcc on principal \emph{right} ideals, while  Bj\"ork extended this by showing that left perfect rings have the dcc even on all fg right ideals.
 
 Following C.\,U.\,Chase, $R$ is \texttt{right coherent} if all direct powers $_RR^\kappa$ are flat. 
It is implicit in Chase's proof (made explicit in \cite[Prop.\,1.24(ii)]{Rot1} that it suffices to check $\kappa =  |R|+\aleph_0 =|L|$, but, beware!, not necessarily $\aleph_0$, \cite[Exple.\,4.2]{Len}. (What if $I$ is finite? Then $R^I = R^{(I)}$ is always flat anyway.) 
 
 $R$\/ is a \texttt{left $\Sigma$-pure-injective} ring if $_RR$ is a $\Sigma$-pure-injective module. By the dcc on pp subgroups (which are right ideals), such rings are left perfect. 
 It is well known that $R$ is right coherent iff all pp defined right ideals are fg.  Conversely, left perfect and right coherent rings are, by the Bass-B\"ork Theorem, $\Sigma$-pure-injective.
 
 See \cite{P1} and \cite{P2} for all this.


\begin{fact}\label{elem} \cite[Thms.\,5, 6]{SE} (Cf.\,\cite{P1}, \cite{P2}.)
\begin{enumerate}[\rm (1)]
 \item The class $_R\flat=\RFlat$ is elementary (equivalently, elementarily closed) if and only if $R$ is right coherent. 
\item The class $\cal P=\RProj$ is elementary (equivalently, elementarily closed)  if and only if $R$ is left perfect and right coherent.
\item The class $\cal F=\RFree$ is elementary (equivalently, elementarily closed)  if and only if $R$ is either local right artinian or finite with $R/J$ a simple ring. 
\end{enumerate}
\end{fact}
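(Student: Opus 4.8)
The plan is to prove each of the three equivalences in the cycle (elementary) $\Rightarrow$ (elementarily closed) $\Rightarrow$ (the stated ring condition) $\Rightarrow$ (elementary). The first arrow is immediate, so the content is in the other two. For (ring condition) $\Rightarrow$ (elementary) the workhorse is the characterisation of definable subcategories recalled above: any class of modules closed under products, direct limits, and pure submodules is axiomatised by implications of pp formulas, hence elementary. Since flat modules are always closed under direct limits and pure submodules, Chase's theorem makes $\RFlat$ a definable subcategory exactly when $R$ is right coherent, which settles this direction of (1). For (2), left perfectness means precisely $\RProj=\RFlat$, and under right coherence this common class is elementary by (1); so this direction of (2) follows as well.

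For the coherence half of (elementarily closed) $\Rightarrow$ (ring condition) in both (1) and (2), use $R^{(I)}\equiv R^I$ for infinite $I$. As $R^{(I)}$ is free, hence flat and projective, elementary closedness of $\RFlat$ (resp.\ of $\RProj$) forces $R^I$ to be flat (resp.\ projective, hence flat) for every infinite $I$; by Chase's criterion this already yields right coherence, and one may restrict to $|I|=|L|$.

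The substantive point is the left perfectness half of (elementarily closed) $\Rightarrow$ (ring condition) in (2). I first record that the direct limit of a countable directed system $(F_n)_n$ purely embeds into the ultraproduct $\prod_n F_n/\mathcal U$ for any ultrafilter $\mathcal U$ on $\omega$ containing all final segments $\{m:m\ge n\}$: the canonical map is injective and pure because $\mathcal U$ is proper (so $\mathcal U$-large sets are nonempty) and the system is directed. Now let $F$ be countably generated flat. By Lazard $F=\varinjlim_n F_n$ with each $F_n$ free of finite rank, so $F$ is a countably generated pure submodule of such a $\prod_n F_n/\mathcal U$. By {\L}o\'s and the Baur--Monk description of elementary equivalence, an ultraproduct of finite-rank free modules is elementarily equivalent to a single free module (of finite rank if the $F_n$ have $\mathcal U$-bounded rank, of infinite rank otherwise); being free it is projective, so by elementary closedness $\prod_n F_n/\mathcal U$ is projective. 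Since a countably generated pure submodule of a projective module is projective, $F$ is projective. As the Bass module of a strictly descending chain of principal right ideals is countably generated and flat but (by Bass's theorem) not projective, no such chain can exist; thus $R$ has the dcc on principal right ideals and is left perfect. Together with the previous paragraph this completes (elementarily closed) $\Rightarrow$ (ring condition) for (2).

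For the free modules (3), the crucial reduction is that elementary closedness of $\RFree$ implies that of $\RProj$: if $N\equiv P$ with $P$ a summand of a free module $P\oplus Q$, then $N\oplus Q\equiv P\oplus Q$ (elementary equivalence is preserved under adding a fixed summand, since pp-indices multiply under $\oplus$), so $N\oplus Q$ is free and $N$ is projective. By (2) the ring is then left perfect and right coherent, hence semiperfect with $R/J=\prod_{i=1}^t M_{n_i}(D_i)$ and ${}_RR\cong\bigoplus_i P_i^{n_i}$ for the indecomposable projectives $P_1,\dots,P_t$; by Kaplansky every projective is $\bigoplus_i P_i^{(\kappa_i)}$, free iff the $\kappa_i$ are suitably proportional to the $n_i$. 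One must now determine exactly when a non-free such projective is elementarily equivalent to a free module; a pp-invariant computation shows every infinite non-free projective is so equivalent unless $t=1$, while a finite-rank non-free projective is elementarily equivalent to a free module precisely when it is infinite. This forces the dichotomy: either all projectives are already free, i.e.\ $t=1$ and $n_1=1$, so $R$ is local and (being also perfect and coherent) right artinian; or the non-free projectives are all finite modules, i.e.\ $R$ is finite, with $t=1$ (that is, $R/J$ simple) to make every infinite projective $P_1^{(\kappa)}\cong R^{(\kappa)}$ free. Conversely both cases make $\RFree$ elementarily closed---over a local ring $\RFree=\RProj$ is elementary by (2); over a finite ring with $R/J$ simple every model of $T$ is an infinite, hence free, projective, while the finite-rank free modules are only elementarily equivalent to themselves---and closed under ultraproducts since ultraproducts of free modules are elementarily equivalent to free modules, hence elementary. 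The main obstacle I anticipate is exactly this last invariant bookkeeping in (3): isolating which projectives over a perfect coherent ring fall into the elementary-equivalence class of a free module, and verifying that the only two escapes are the local right artinian and the finite $R/J$-simple rings. By contrast the computation underlying (2), that an ultraproduct of finite-rank free modules is elementarily equivalent to a free module, is routine, yet it is the linchpin converting the homological Bass module into a model-theoretic witness.
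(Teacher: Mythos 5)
The paper does not actually prove this Fact---it is quoted from Sabbagh--Eklof \cite{SE}---so your proposal can only be measured against the auxiliary facts the paper records elsewhere (notably \S\ref{dec} and Lemma \ref{n=1}). Your parts (1) and (2) are correct. The cycle (elementary) $\Rightarrow$ (elementarily closed) $\Rightarrow$ (ring condition) $\Rightarrow$ (elementary) is a sound strategy; the coherence halves via $R^{(I)}\equiv R^I$ and Chase are standard; and your perfectness argument---purely embedding a countable direct limit of finite-rank frees into an ultraproduct, using {\L}o\'s/Baur--Monk to see that such an ultraproduct is elementarily equivalent to a free module, then invoking ``countably generated pure submodules of projectives are projective'' and the Bass module---is a genuine, correct proof, and in spirit close to how the paper itself deploys the Bass module (Lemma \ref{flatmodel}).

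Part (3), however, contains a genuine flaw at exactly the step you flagged as the anticipated obstacle. The claimed ``pp-invariant computation'' that \emph{every} infinite non-free projective is elementarily equivalent to a free module unless $t=1$ is false: take $R=k\times k$ for an infinite field $k$ (semisimple, hence left perfect and right coherent, with $t=2$); then $P_0^{(\omega)}=(k\times 0)^{(\omega)}$ is infinite and non-free, yet the pp formula $e_2x=x$ (where $e_2=(0,1)$) defines $0$ in it and an infinite subgroup in every nonzero free module, so it is elementarily equivalent to no free module. What is true, and all that is needed to force $t=1$, is the \emph{existential} statement recorded in \S\ref{dec}: if $t\geq 2$, then $\bigoplus_i P_i^{(\kappa_i)}$ with all $\kappa_i$ infinite but not all equal is non-free yet elementarily equivalent to $R^{(\omega)}$. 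Likewise your claim that a finitely generated non-free projective is elementarily equivalent to a free module ``precisely when it is infinite'' fails for $t\geq 2$ (same example, $P_0$ itself); it is true in the case actually needed ($t=1$, $n_1\geq 2$, $R$ infinite), but this does not follow from an obvious invariant count---one must argue, as the paper does in the proof of Theorem \ref{diamond}(4)(i)$\implies$(iii), by taking an elementary extension $P'$ of $P_1$ of higher power, using elementarity of $\RProj$ (your part (2)) to get $P'$ projective, hence $P'\cong P_1^{(I)}\cong R^{(I)}$ free, whence $P_1\equiv R^{(I)}$. With these two repairs your dichotomy (local right artinian, or finite with $R/J$ simple) and your converse direction---including the ultraproduct-closure argument in the finite case---go through.
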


\subsection{Decomposition theory of projectives over perfect rings}\label{dec}  Let $\cal K$ be a class of $R$-modules \texttt{with de\-com\-po\-si\-tion theory}, by which we mean that $\cal K$ possesses a set (sic!), $\Ind(\cal K)$, of \texttt{indecomposables} (i.e., modules that cannot be decomposed into a direct sum of two proper submodules) with the following two properties.  First of all,  every direct sum of members of $\Ind(\cal K)$ is in $\cal K$ and second, and more importantly, every member of $\cal K$ can be decomposed into a direct sum of members of $\Ind(\cal K)$ in an essentially unique way, which means up to permutation and isomorphism of the indecomposables involved. 

Two examples play a role here. One is the well-known classical example of semisimple artinian rings (i.e., an artinian ring with Jacobson radical zero). The other is that of projective modules over a left (semi-)perfect ring, see \cite[Thm.\,1]{SE} or \cite[Thm.3.10]{Fa} (for semiperfect rings). The former is in fact a special case of the latter, as $\cal K=\RMod=\RProj$, in which case $\Ind(\cal K)$ is the (finite!) set of simple modules. In the latter case,  $\Ind(\RProj)$ is also a finite set, for the simple reason that $_RR$ itself is projective and must therefore so decompose (and $1\in R$ and every $P_i$ has to occur, by uniqueness). So we can write $\Ind(\RProj)=\{P_i\,:\,i<n\}$. Then $_RR = \bigoplus_{i<n} P_i^{(r_i)}$ for some integers $r_i$ ($i<n$). For purposes of this paper only, denote the integer $n$, i.e., the number of non-isomorphic projective indecomposables (left) $R$-modules, by $\n(R)$ and call $\sum_{i<n} r_i$ the \texttt{degree} of $R$, denoted $\deg(R)$. Note that $|P_i|\leq|R|\leq |L|$.

Further, if $n(R)>1$ and $\kappa, |L|\leq \lambda$, then, by uniqueness of decompositions, $P_0^{(\kappa)} \oplus P_1^{(\lambda)}$ and $P_0^{(\lambda)} \oplus P_1^{(\kappa)}$ are two non-isomorphic projectives of power $\lambda$ (and $P_0^{(\lambda)} \oplus P_1^{(\lambda)}$ is yet another one)). Clearly the first two of them are not free. In fact, a direct sum $\bigoplus_{i<n} P_i^{(\kappa_i)}$ with at least one of the $\kappa_i$ infinite is free if and only if all the $\kappa_i$ are the same. Also, it is well-known that $\bigoplus_{i<n} P_i^{(\kappa_i)}$ with all $\kappa_i$ infinite is elementarily equivalent to, say, $R^{(\kappa_0)}$, hence a model of $T$. Notice its cardinality in higher powers: if at least one of the $\kappa_i\geq |L|$, then $|\bigoplus_{i<n} P_i^{(\kappa_i)}| = \max\{\kappa_i\,:\, i<n\}$.  Also,  if $R$ is left semiperfect, then $_RR$ is indecomposable if and only if $\n(R)=\deg(R)=1$.
Let us summarize (and add (iv), (3), and (4) from \cite{SE}).

\begin{lem}\label{n=1}  \cite{SE} Suppose $R$ is left perfect.
\begin{enumerate}[\rm (1)]
 \item 
\begin{enumerate}[\rm (a)]
\item $\n (R)=1$ if and only if $\cal P$ is categorical in a higher power, i.e., all projectives of higher power are free.
\item This is the case if $\cal M$ is categorical in a higher power, i.e., all models of $T$ of higher power are free, but not conversely, see Thm.\,\ref{diamond} below.
\end{enumerate}
\item \cite[Cor.\,3 and 5]{SE}. The following are equivalent.
\begin{enumerate}[\rm (i)]
\item $\n (R)=\deg(R)=1$.
\item All projectives are free, i.e., $\cal P \subseteq\cal F$.
\item $_RR$ is indecomposable.
\item $R$ is local.
\end{enumerate}
\item \cite[Prop.\,3]{SE}. Local left perfect and right coherent rings are right artinian.
\item \cite[Cor.\,2]{SE}. $R/J$ simple iff $n(R)=1$.
\qed
\end{enumerate}
\end{lem}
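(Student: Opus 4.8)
The plan is to prove Lemma \ref{n=1} in four parts, drawing on the decomposition theory for projectives over left perfect rings recalled just above (essentially from \cite{SE}), where $\Ind(\RProj)=\{P_i : i<n\}$ is finite and $_RR=\bigoplus_{i<n}P_i^{(r_i)}$. Throughout I would use the key structural facts established in \S\ref{dec}: that a direct sum $\bigoplus_{i<n}P_i^{(\kappa_i)}$ with at least one $\kappa_i$ infinite is free if and only if all the $\kappa_i$ are equal, and that such a sum with all $\kappa_i$ infinite is a model of $T$.

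For part (1)(a), I would argue both directions. If $\n(R)=1$, then $\Ind(\RProj)=\{P_0\}$ and $_RR=P_0^{(r_0)}$, so every projective is a direct sum of copies of $P_0$; in a fixed higher power $\lambda$ any such sum has the form $P_0^{(\lambda)}$, which is free, giving categoricity of $\cal P$ in $\lambda$. Conversely, if $\n(R)>1$, I would invoke the explicit observation above that $P_0^{(\kappa)}\oplus P_1^{(\lambda)}$ and $P_0^{(\lambda)}\oplus P_1^{(\kappa)}$ are non-isomorphic projectives of the same higher power $\lambda$ (one not free), contradicting categoricity. For part (1)(b), the direction "$\cal M$ categorical $\Rightarrow \n(R)=1$" follows because $\cal P$ of higher power sits inside $\cal M$ (projectives of infinite rank are models of $T$), so categoricity of the larger class $\cal M$ forces all those projectives to coincide, hence to be free, which by (1)(a) means $\n(R)=1$. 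The failure of the converse is deferred to the forward reference Thm.\,\ref{diamond}, so no argument is needed there beyond the citation.

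Part (2) is a cycle of implications (i)$\Rightarrow$(ii)$\Rightarrow$(iii)$\Rightarrow$(iv)$\Rightarrow$(i). The implication (i)$\Rightarrow$(ii) is immediate: if $\n(R)=\deg(R)=1$ then $_RR=P_0$ itself, so $P_0\cong {}_RR$ and every projective, being a direct sum of copies of $P_0=R$, is free. The step (ii)$\Rightarrow$(iii) is a contrapositive: if $_RR$ decomposes nontrivially then either $\n(R)>1$, producing a non-free projective as in part (1)(a), or $\n(R)=1$ but $\deg(R)=r_0>1$, in which case $P_0$ itself is a projective that is a proper summand of $R$ and cannot be free (a free module of rank $\geq 1$ has $R$ as a summand, forcing $P_0\cong R$, contradicting $r_0>1$). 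For (iii)$\Rightarrow$(iv) and (iv)$\Rightarrow$(i) I would lean on standard ring theory together with the cited \cite[Cor.\,3 and 5]{SE}: indecomposability of $_RR$ over a left perfect ring is well known to be equivalent to locality, and locality pins down the idempotents so that $\n(R)=\deg(R)=1$. Parts (3) and (4) I would simply cite from \cite[Prop.\,3]{SE} and \cite[Cor.\,2]{SE} respectively, as the statement itself attributes them.

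The main obstacle is the purely ring-theoretic equivalence of indecomposability of $_RR$ with locality in part (2), since this is the one link not handed to us directly by the decomposition bookkeeping of \S\ref{dec}; everything else reduces to counting the $\kappa_i$ in decompositions and invoking the essential uniqueness guaranteed by the decomposition theory. I expect the counting arguments to be routine once the "equal exponents $\iff$ free" criterion is in hand, so the real care lies in handling the $\deg(R)>1$ case cleanly (distinguishing it from the $\n(R)>1$ case) and in citing the correct statements from \cite{SE} for the locality equivalence rather than reproving it.
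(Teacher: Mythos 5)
Your overall plan coincides with the paper's own treatment: Lemma~\ref{n=1} is stated there with a terminal QED and no separate proof, being a summary of the decomposition-theoretic bookkeeping of \S\ref{dec} plus the cited results of \cite{SE}; your handling of (1)(a), of part (2) via uniqueness of decompositions, and your outsourcing of (2)(iii)$\Leftrightarrow$(iv), (3) and (4) to \cite{SE} is exactly that.

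There is, however, a genuine error in your argument for (1)(b). It rests on the claim that ``projectives of infinite rank are models of $T$'', i.e.\ that $\cal P$ of higher power sits inside $\cal M$. This is false whenever $\n(R)>1$, and \S\ref{dec} asserts something weaker: $\bigoplus_{i<n} P_i^{(\kappa_i)}$ is a model of $T$ only when \emph{all} the exponents $\kappa_i$ are infinite. Concretely, take $R=k\times k$ for a field $k$ (semisimple artinian, hence left perfect), with $P_0=k\times 0$ and $P_1=0\times k$. Then $P_0^{(\lambda)}$ is a projective of power $\lambda$, but it is not a model of $T$: the pp pair given by $\phi(x)\equiv\exists y\,\bigl(x=(0,1)y\bigr)$ and $\psi(x)\equiv x=0$ is closed in $P_0^{(\lambda)}$ (the idempotent $(0,1)$ annihilates $P_0$) and open in $R^{(\lambda)}$. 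So categoricity of $\cal M$ in power $\lambda$ says nothing at all about projectives like $P_0^{(\lambda)}$ that lie outside $\cal M$, and you cannot conclude that \emph{all} projectives of power $\lambda$ are free, which is what your appeal to (1)(a) requires. The repair is to place the non-categoricity witness inside $\cal M$ and argue contrapositively: if $\n(R)>1$, then $M=P_0^{(\omega)}\oplus P_1^{(\lambda)}\oplus\cdots\oplus P_{n-1}^{(\lambda)}$ has all exponents infinite, hence is a model of $T$, of power $\lambda$, yet it is not free because the exponents are not all equal; since $R^{(\lambda)}$ is a free model of the same power, $\cal M$ is not categorical in $\lambda$. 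With that substitution (and your otherwise correct counting arguments) the proposal goes through.
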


In \cite[Thm.\,6]{SE} this kind of argument is used to prove that $n(R)$ must be $1$ when the class of free modules is elementary. Incidentally, \cite{SE} was written when there was apparently no text book treatment of the projective decomposition theory (not even for perfect rings), which seems to go back to, independently, G.\,B.\,Klatt (Ph.D. unpublished, 1969) and B.\,J.\,Müller (1970), cf.\,\cite[Notes Ch.\,3]{Fa}.

\section{From categorical to universal}
\subsection{Varieties in general}\label{V} Let $\cal V$ be a variety in the sense of universal algebra (i.e., an equational class) in a language $L$. 
A natural class to look at is $\cal V^{>|L|}$, the class of all members of $\cal V$ in a higher power, as it contains  exactly one free algebra of every cardinality $\kappa>|L|$ (whose rank is also $\kappa$).

 The smallest subclass of $\cal V$ considered in this paper is $\cal F$, the class of all free structures in $\cal V$. Denote by $F_\kappa$ the free structure in $\cal V$ of rank $\kappa$. 
Note that $\cal F$ is categorical in all higher powers, since for $\kappa>|L|$, the rank $\kappa$ is the cardinality $|F_\kappa|$. 
If $\cal F\subseteq \cal W\subseteq\cal V$, the categoricity of $\cal W$  in some $\kappa>|L|$ is, for mere reasons of cardinality of free structures, equivalent to the property: \\

\emph{Every structure in $\cal W^{=\kappa}$ is free.}\\

Thus, classical questions like "when is every projective, resp., flat module of certain cardinality free?" can be viewed as a categoricity question for such classes $\cal W$, where here the ambient variety is $\RMod$, that of all left $R$-modules. For investigations of categoricity of entire (quasi-) varieties, see \cite{Kea}.

Back to an arbitrary variety $\cal V$, let $T:=\Th({\cal F}^{>|L|})$, the $L$-theory of its free members of infinite rank. As in modules, also in this more general case $T$ is complete, for all free $L$-structures of infinite rank are elementarily equivalent---by back-and-forth, see e.g.\, \cite[Exercise\,9.2.8]{H}. 
Let ${\cal M} =\Mod T$, the class of all structures elementarily equivalent to a free structure in $\cal V$ of infinite rank.

Consider the following statements that are arranged in decreasing order of strength.\\ 

\begin{tikzcd}
                                                  & (2) \arrow[rd, Rightarrow]          &                                    &                                               & (3)\frac{1}{2} \arrow[rd, Rightarrow] &                                                                    &     \\
(1) \arrow[ru, Rightarrow] \arrow[rd, Rightarrow] &                                     & (2)_{\cal M} \arrow[r, Rightarrow] & (3) \arrow[rr, Rightarrow] \arrow[ru, dotted] &                                       & (4)_{\cal N} \arrow[r, Rightarrow] \arrow[r, phantom] \arrow[r, Rightarrow] & (5)_{\cal N} \\
                                                  & (1)_{\cal M} \arrow[ru, Rightarrow] &                                    &                                               &                                       &                                                                    &    
\end{tikzcd}


\begin{enumerate}
 \item\hspace{.5em} All members of $\cal V$ are free.
 \setcounter{enumi}{0} 
  \item\hspace{-.6em}$_{\cal M}$ All members of $\cal M$ are free.
 \item\hspace{.5em}   All members of $\cal V^{>|L|}$ are free (equivalently, $\cal V$ is categorical in all/some higher powers).
  \setcounter{enumi}{1} 
  \item\hspace{-.6em}$_{\cal M}$\label{five}  All members of $\cal M^{>|L|}$ are free (equivalently, $\cal M$ is categorical in all/some higher powers).
\item\hspace{.5em} \label{six} The free structures in $\cal V$ of rank $\kappa>|L|$ are saturated.
\setcounter{enumi}{2} 
\item\hspace{-2mm}$\frac{1}{2}$ The free structures in $\cal V$ of rank $\kappa>|L|$ are totally transcendental and $\preceq$-universal in $\cal M$.
 \item\hspace{-.6em}$_{\cal N}$ \label{seven} Given an intermediate  class  $\cal F\subseteq \cal N\subseteq\cal M$,
 the free structures in $\cal V$ of rank $\kappa>|L|$ are  $\preceq$-universal in $\cal N$ and also superstable.
 \item\hspace{-.6em}$_{\cal N}$ \label{eight} Given an intermediate  class  $\cal F\subseteq \cal N\subseteq\cal M$,
 the free structures in $\cal V$ of rank $\kappa>|L|$ are  $\preceq$-universal in $\cal N$.
  \end{enumerate}
 
The implications in the diamond and $(\ref{seven})\implies(\ref{eight})$ are trivial. (Condition (\ref{eight}) will be useful  in the module case below for the specific choice of $\cal W=\RFlat$.) $(\ref{five})_{\cal M}\implies(\ref{six})$ is a standard model-theoretic fact \cite[Cor.\,12.2.13]{H}. So is that saturation implies $\preceq$-universality (within $\cal M$) \cite[Thm.\,10.1.6]{H}. The state of affairs of  $(\ref{six})\implies(\ref{seven})$ is this: 

(\ref{six})$\implies$(\ref{six})$\frac{1}{2}$ holds for arbitrary varieties in a countable language  \cite[Lem.\,1]{B-S} and for the variety of all modules over arbitrary (not necessarily countable) rings \cite[Cor.\,3.5(b)]{KP}. 

\begin{quest}
Does (\ref{six})$\implies$(\ref{six})$\frac{1}{2}$ hold  in general? 
\end{quest}

(\ref{six})$\frac{1}{2}\hspace{-2mm}\implies\hspace{-2mm}(\ref{seven})$  is always true, simply because  total transcendence implies superstability \cite[Thm.6.7.5]{H} (which is obvious  in modules, see Fact \ref{sstt} above).

Tarski asked,  whether (1)$\implies$(2) was reversible,  see \cite{Baldwin-Lachlan}, where a counterexample was given. We present a simple example of modules in Rem.\,\ref{rev}(\ref{Tarski}) below. 

\begin{rem}
Note that free groups are never saturated. Neither are the free abelian groups: $\mathbb{Z}^{(I)}$ is saturated for no set $I$, but $\mathbb{Z}^{(I)}\oplus \mathbb{Q}^{(I)}$ is, provided $I$ is infinite. (For finite $I$, the group $\mathbb{Z}^{(I)}\oplus \mathbb{Q}^{(J)}$ is saturated for every infinite $J$.) 
Note that  $\mathbb{Z}$ is not (left) perfect, nevertheless we have a decomposition theory for $\cal P = \cal F$ (in this case) with $\n(\mathbb{Z})=\deg\mathbb{Z})=1$.
\end{rem}

\subsection{Modules}\label{modules} Fact \ref{sstt} applied to
 free modules of infinite rank proves at once the module case of  (\ref{six})$\frac{1}{2}\Longleftrightarrow$\,(\ref{seven}) (and thus (\ref{six})$\implies$(\ref{six})$\frac{1}{2}$) as noted in \cite[Cor.\,3.5(b)]{KP}. Here is another proof that  yields a more general result.

\begin{prop}\label{weaksat} Let $M$ be a  Mittag-Leffler module whose power $M^{(\omega)}$ is weakly saturated. Then $M$ and $M^{(\omega)}$ are totally transcendental. 
In particular, {\rm (\ref{six})$\implies$(\ref{six})$\frac{1}{2}$} holds for modules.
\end{prop}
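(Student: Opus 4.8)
The plan is to prove the contrapositive of the underlying dichotomy. Set $N:=M^{(\omega)}$. Since $\Sigma$-pure-injectivity---equivalently total transcendence, equivalently the dcc on pp subgroups---is invariant under passage to infinite weak powers, it suffices to show that $N$ has the dcc on pp subgroups; this yields at once that both $M$ and $M^{(\omega)}$ are totally transcendental. Two features of $N$ will drive the argument: it is weakly saturated by hypothesis, and it is Mittag-Leffler because $M$ is and the Mittag-Leffler property passes to direct sums \cite{RG} (any tuple of $N$ lies in a finite, hence pure, sub-sum where its pp-type is already finitely generated). So I would assume, towards a contradiction, that $N$ admits an infinite strictly descending chain of pp subgroups; replacing the defining formulas by their successive conjunctions I may take it as $\phi_0(N)\supsetneq\phi_1(N)\supsetneq\cdots$ with $\phi_{i+1}\leq\phi_i$ as pp formulas (global implication). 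The goal is then to manufacture, using weak saturation, an element of $N$ whose pp-type is \emph{not} finitely generated, contradicting Mittag-Lefflerness.

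The reason $N$ (rather than $M$) is the right object is that, for every pp formula $\phi$, one has $\phi(N)=\phi(M)^{(\omega)}$, so every \emph{proper} containment $\theta(N)\subsetneq\psi(N)$ of pp subgroups of $N$ automatically has \emph{infinite} index, its quotient being $(\psi(M)/\theta(M))^{(\omega)}$, an infinite direct sum of copies of a nonzero group. With this in hand I would form the filter $F:=\{\theta\text{ pp}:\phi_i(N)\subseteq\theta(N)\text{ for some }i\}$ and consider the $1$-type over $\emptyset$
\[
p(x)\ :=\ \{\phi_i(x):i\in\omega\}\ \cup\ \{\neg\theta(x):\theta\text{ pp},\ \theta\notin F\}.
\]
Any realization $b\models p$ in $N$ has pp-type exactly $F$: it satisfies every $\phi_i$, hence every member of $F$, while $b\notin\theta(N)$ for $\theta\notin F$ forces the reverse inclusion.

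The main obstacle is the consistency (finite satisfiability in $N$) of $p$, and this is precisely where the infinite-index feature is used. A finite fragment of $p$ reduces, after replacing its positive part by the formula $\phi_n$ of largest index, to finding a single $c\in\phi_n(N)$ lying outside finitely many $\theta_1(N),\dots,\theta_m(N)$ with each $\theta_j\notin F$. Now $\theta_j\notin F$ gives $\phi_n(N)\not\subseteq\theta_j(N)$, so each $\phi_n(N)\cap\theta_j(N)$ is a proper---hence, by the previous paragraph, infinite-index---subgroup of $\phi_n(N)$. By the classical theorem of B.\,H.\,Neumann that no group is the union of finitely many subgroups of infinite index, $\phi_n(N)$ is not covered by the $\theta_j(N)$, and the required $c$ exists. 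Thus $p$ is finitely satisfiable in $N$, hence consistent with $\Th(N)$, hence realized in $N$ by weak saturation.

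It remains to see that $F$ is not finitely generated in the global sense demanded by Mittag-Lefflerness, and here the strictness of the chain is exactly what is needed: a single generator $\phi\in F$ would satisfy $\phi_j(N)\subseteq\phi(N)$ for some $j$ and also $\phi\leq\phi_i$ globally for every $i$, whence $\phi_j(N)\subseteq\phi(N)\subseteq\phi_{j+1}(N)$, contradicting $\phi_{j+1}(N)\subsetneq\phi_j(N)$. Therefore $b$ has a non-finitely-generated pp-type, contradicting that $N$ is Mittag-Leffler; so no such chain exists and $N$, hence $M$, is totally transcendental. For the final clause, a saturated free module $R^{(\kappa)}$ of rank $\kappa>|L|$ is Mittag-Leffler (being projective) and satisfies $\bigl(R^{(\kappa)}\bigr)^{(\omega)}\cong R^{(\kappa)}$, which is saturated and so weakly saturated; the Proposition then gives total transcendence, while saturation already gives $\preceq$-universality in $\cal M$, so (\ref{six}) implies (\ref{six})$\frac{1}{2}$.
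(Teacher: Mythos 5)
Your proof is correct, and its skeleton---build a type from the chain, realize it by weak saturation, contradict Mittag-Lefflerness---is the same as the paper's, but the mechanics differ in a genuine way. The paper never proves its type consistent: it runs a dichotomy. Its type's negative part consists of the pp formulas lying globally below all $\phi_i$; if that type is consistent, weak saturation plus the Mittag-Leffler property give an immediate contradiction, hence it is inconsistent, and then compactness together with McKinsey's Lemma \cite[Cor.\,9.1.7]{H} (applied to $\phi_j\leq_{M^{(\omega)}}\bigvee_{k<n}\psi_k$) shows the chain stabilizes in $M^{(\omega)}$---no contrapositive setup, no strictness assumption, and no consistency proof. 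You instead assume a strictly descending chain and prove consistency of a stronger type (negative part: all pp formulas outside the filter $F$) outright; this forces you to supply an extra idea the paper does not need, namely that $\phi(M^{(\omega)})=\phi(M)^{(\omega)}$ makes every proper inclusion of pp subgroups of $N=M^{(\omega)}$ have infinite index, so that B.\,H.\,Neumann's covering theorem applies. Your Neumann step and the paper's McKinsey step are close cousins (both are finite-union principles for subgroups), but they are deployed on opposite sides of the argument: the paper extracts stabilization from inconsistency, you extract consistency from non-stabilization. Your route costs more work but exhibits an explicit element whose pp-type is exactly the non-finitely-generated filter $F$, making the role of the $\omega$-power transparent; moreover, your treatment of the final clause, applying the Proposition to $M=R^{(\kappa)}$ itself (so that $M^{(\omega)}\cong M$ is saturated, hence weakly saturated), is arguably cleaner than the paper's choice $M={_RR}$, which tacitly requires weak saturation to pass from $R^{(\kappa)}$ down to $R^{(\omega)}$.
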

\begin{proof}
 Since free modules are Mittag-Leffler, the second clause is a special case of the first, where $M= {_RR}$.
 For the first, 
 suppose  $\phi_0(M)\supseteq \phi_1(M)\supseteq \phi_2(M)\supseteq \ldots$ is an infinite  descending chain of pp subgroups   in $M$. Consider the type\\ $p:=\{\phi | \phi\, \text{is pp and}\,\phi_i\leq \phi\, \text{for some}\, i<\omega\}\cup\{\neg\psi | \psi\, \text{is pp and}\,\psi\leq\phi_i \,\text{for all}\, i<\omega\}$.  

If $p$ is consistent with  $M^{(\omega)}$, it is realized by an element $a\in M^{(\omega)}$. As the module is Mittag-Leffler, $a$ must satisfy a pp formula $\psi$ with $\psi\leq\phi_i$ for all $i$. But then $p$ contains $\neg\psi$, which $a$ must also satisfy, a contradiction,

So $p$ is inconsistent with  $M^{(\omega)}$. Hence there are $j<\omega$ and finitely many $\psi_k$ below all $\phi_i$ (wrt the entire category $\RMod$) such that $\phi_j\leq_{M^{(\omega)}} \bigvee_{k<n} \psi_k$. By McKinsey's Lemma \cite[Cor.\,9.1.7]{H},  $\phi_j\leq_{M^{(\omega)}}  \psi_k$ for some $k<n$. But $\psi_k\leq \phi_i$ for all $i<\omega$, hence $\phi_j\leq_{M^{(\omega)}} \phi_i$ for all $i<\omega$, which proves that the chain must stabilize in $M^{(\omega)}$, as desired.
\end{proof}

We now have the following simplified picture for $\RMod$.\\

\noindent
\begin{tikzcd}
                                                  & _R(2) \arrow[rd, Rightarrow] &                                                           &             &              &                           &     \\
_R(1) \arrow[ru, Rightarrow] \arrow[rd, Rightarrow] &                            & _R(2)_{\cal M}  \arrow[r, Rightarrow] & _R(3) \arrow[r, Rightarrow] & _R(3)\frac{1}{2}  \arrow[r, Leftrightarrow] &  _R(4)_{\cal N} \arrow[r, Rightarrow] & _R(5)_{\cal N} \\
                                                  &   (1)_{\cal M} \arrow[ru, Rightarrow]               &                                                           &             &              &                           &    \\
\end{tikzcd}

The left subscript $R$ is to indicate that $_R(n)$ is the statement $(n)$ for ${\cal V} = \RMod$, the variety of left $R$-modules. The next task is to find ring- and  module-theoretic equivalents of these. Here is a list of the ring-theoretic equivalents for guidance, which we are going to prove, together with other equivalents.\\ 


\begin{description}
   \item $_R(1)$\hspace{1em} $R$ is a division ring.
   \item $_R(1)_{\cal M}$ $R$ is right artinian and either local or  finite with $R/J$ a simple ring.
   \item $_R(2)$ \hspace{1em}$R$ is a matrix ring over a division ring.  
   \item $_R(2)_{\cal M}$ $R$ is left perfect and right coherent with $R/J$  a simple ring. 
   \item $_R(3)$\hspace{1em} $R$ is left perfect and right coherent.
   \item $_R(4)$\hspace{1em} $R$ is left-$\Sigma$-pure-injective.
   \item $_R(5)$\hspace{-.1em}$_{\RFlat}$\hspace{1em} $R$ is left perfect.
\end{description}

\begin{rem}\label{rev}
\begin{enumerate}[\rm(a)]
 \item The implications between these are as shown in the graph above. This is obvious, except for possibly the last two. For (4)$\implies$(5) note that left-$\Sigma$-pure injectivity is equivalent to the dcc on all pp subgroups, while left perfectness is equivalent to that on the `subset' of principal right ideals. For (3)$\implies$(4) use the fact that right coherence is equivalent to finite generation of all right ideals which are pp subgroups of $_RR$ and then apply the dcc on finitely generated ideals given by Bj\"ork's strengthening of Bass' Theorem.
 \item None of the  implications are reversible (except as shown). This is clear for the implications in the diamond and $_R(2)_{\cal M}\implies _R(3)$. Every left $\Sigma$-pure-injective ring is left perfect, but there are such that are not right coherent, see \cite[Satz 6.5]{Zim} and also \cite[Example 4.4.23]{P2}, hence $_R(3) \implies _R(4)$ is irreversible as well. (Note, the saturated model of $T$ is not free over this ring.) For the irreversibility of the final implication, \cite[Exple 17]{ZZ} exhibits a ring that is left perfect but not left $\Sigma$-pure injective (totally transcendental).
  
    \item \label{Tarski} Recall, the implication (2)$\implies$(1) about general varieties was conjectured by Tarski. Once we see, in Thm.\,\ref{diamond}, that these are equivalent to the corresponding ring-theoretic statements listed above, it is immediate that the variety of modules over a ring $R$ of, say $2\times 2$ matrices over an infinite field $k$ is a very natural counterexample, for one has $\n(R)=1$ and $\deg(R)=2$.

\end{enumerate}

\end{rem}

We start with  the statements  in  the diamond. Some of them are known, cf.\, Lemma \ref{n=1}. All statements involving $T$ or $\cal M$ are new (or from \cite{PiRo}).

\begin{thm} \label{diamond}
\begin{enumerate}[\rm (1)]
 \item The following are equivalent.
 \begin{enumerate}[\rm (i)]
 \item {\rm $_R(1)$:} All members of $\cal V=\RMod$  are free, i.e., $\cal V\subseteq \cal F$ (and clearly $\n(R)=\deg(R)=1$).
\item $R$ is a  division ring.
\end{enumerate}
\item The  following are equivalent.
 
\begin{enumerate}[\rm (i)]
\item  {\rm $_R(2)$:} all modules of higher power are free.
\item $\cal V$ is categorical in higher power.
\item All modules are projective, i.e., $\cal V\subseteq \cal P$, and $\n(R)=1$. 
\item $R$ is a matrix ring over a division ring.
\end{enumerate}

\item\label{2M} The  following are equivalent.
 
\begin{enumerate}[\rm (i)]
\item\label{(2iv)} {\rm $_R(2)_{\cal M}$:}  all members of $\cal M$ of higher power are free.
\item\label{(2iii)}  $T$ is categorical in all (some) higher powers.
\item\label{(2v)}  All models of $T$ are projective, i.e. $\cal M\subseteq \cal P$, and $\n(R)=1$.
\item\label{(2ii)}  The definable subcategory generated by $\RFlat$ is categorical in higher power.
\item\label{(2i)}  $R$ is left perfect and right coherent and $R/J$ is a simple ring.

\end{enumerate}
\item The  following are equivalent.
 
\begin{enumerate}[\rm (i)]
\item  {\rm $_R(1)_{\cal M}$:} all models of\/ $T$ are free, i.e., $\cal M\subseteq \cal F$.
\item The class $\cal F$ is elementary.
\item  $\cal M\subseteq \cal P$ and $\n(R)=1$, and if $R$ is infinite, then $\deg(R)=1$.

\item Every infinite member of the definable subcategory generated by $\RFlat$ is free.
\item $R$ is right artinian and either local or  finite with $R/J$ a simple ring.

\end{enumerate}

\end{enumerate}
\end{thm}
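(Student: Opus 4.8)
The plan is to obtain (ii)$\Leftrightarrow$(v) and (iii)$\Leftrightarrow$(v) with no module theory, to close the equivalence through a short loop via (iv) and (i), and to concentrate the real work in the single implication (i)$\Rightarrow$(iii). Now (ii)$\Leftrightarrow$(v) is literally Fact~\ref{elem}(3). For (iii)$\Leftrightarrow$(v) I would argue on the ring side: by part~(\ref{2M}) the conjunction ``$\cal M\subseteq\cal P$ and $\n(R)=1$'' is the same as ``$R$ left perfect, right coherent, and $R/J$ simple''; for finite $R$ this is the finite branch of (v), while for infinite $R$ the extra clause $\deg(R)=1$ promotes $\n(R)=1$ to $\n(R)=\deg(R)=1$, so $R$ is local (Lemma~\ref{n=1}(2)) and then right artinian (Lemma~\ref{n=1}(3)), the local branch of (v); conversely a local right artinian ring is left perfect and right coherent with $\n(R)=\deg(R)=1$. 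I then close the loop cheaply. For $(v)\Rightarrow(iv)$: a left perfect and right coherent ring has $\RFlat=\cal P$ as a definable subcategory, so the definable subcategory generated by $\RFlat$ is $\cal P$; under (v) its infinite members are free---if $R$ is local then \emph{all} projectives are free (Lemma~\ref{n=1}(2)), and if $R$ is finite then an infinite projective $P_0^{(\kappa)}$ has $\kappa$ infinite and hence equals $R^{(\kappa)}$. For $(iv)\Rightarrow(i)$: that subcategory is elementary and contains the flat module $R^{(\omega)}$, hence contains all of $\cal M$, whose members are infinite and so free by (iv). (Just as cheaply $(ii)\Rightarrow(i)$, an elementary class being closed under $\equiv$.)

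For (i)$\Rightarrow$(iii) the first three ingredients are routine. From $\cal M\subseteq\cal F\subseteq\cal P$ I get $\cal M\subseteq\cal P$; in particular every flat model of $T$ is projective, so $R$ is left perfect by Lemma~\ref{flatmodel}. Since ${}_RR^{I}\in\cal M$ for infinite $I$, the power $R^{|L|}$ lies in $\cal M\subseteq\cal F\subseteq\RFlat$, and flatness of $R^{|L|}$ gives right coherence (it suffices to test the power $|L|$). If $\n(R)\ge 2$, then by the decomposition theory of \S\ref{dec} the module $P_0^{(\aleph_0)}\oplus P_1^{(\aleph_1)}$ has all exponents infinite, hence lies in $\cal M$, yet has unequal exponents, hence is not free---contradicting (i); so $\n(R)=1$ (equivalently $R/J$ is simple). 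For finite $R$ this already yields (iii).

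The main obstacle is the remaining assertion that $\deg(R)=1$ when $R$ is infinite; this is exactly what separates (i) from the weaker higher-power statement ${}_R(2)_{\cal M}$, and it forces one to produce a non-free \emph{small} model of $T$. Suppose $\deg(R)=r_0\ge 2$, so ${}_RR=P_0^{(r_0)}$ and $P_0$ itself is projective but not free. The basic ring $\operatorname{End}_R(P_0)\cong eRe$ is local, and (being Morita equivalent to $R$) left perfect and right coherent, so right artinian by Lemma~\ref{n=1}(3); a local right artinian ring has cardinality a finite power of the order of its residue division ring $D=eRe/J(eRe)$, and Morita invariance makes it infinite together with $R$, so $D$ is infinite. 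Now every pp subgroup of $P_0$ is an $\operatorname{End}_R(P_0)$-submodule, and $\operatorname{End}_R(P_0)$ is local with residue division ring $D$; hence any nonzero pp quotient $\phi(P_0)/\psi(P_0)$ has, by Nakayama applied to a cyclic submodule, a nonzero $D$-vector space as a quotient and is therefore of size $\ge|D|$, i.e.\ infinite. Thus every finite pp-invariant of $P_0$ vanishes, and comparing Baur--Monk invariants yields $P_0\equiv P_0^{(\omega)}=R^{(\omega)}$. The non-free $P_0$ then lies in $\cal M$, contradicting (i); so $\deg(R)=1$, which completes (i)$\Rightarrow$(iii) and hence the theorem. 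The delicate point, and the one I expect to cost the most, is precisely this passage: showing that an infinite such ring has infinite residue division ring, and that this infiniteness annihilates all finite pp-invariants of $P_0$.
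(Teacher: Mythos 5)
Your proposal addresses only part (4) of the theorem. Parts (1) and (2) never appear, and part (3) --- where the paper does a large share of the work (the invocation of the Bass-module Lemma \ref{flatmodel}, the categoricity equivalences, and the identification of ``$\cal M\subseteq\cal P$ and $\n(R)=1$'' with ``left perfect, right coherent, $R/J$ simple'') --- is used throughout as a black box: both your (iii)$\Leftrightarrow$(v) and your closing loop rest on it. This dependency is not circular, since the paper's own proof of part (4) also invokes part (3); but as a proof of the stated theorem your text is incomplete, and you still owe arguments for (1), (2), and (3).

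Within part (4) your argument is essentially correct, and its core is genuinely different from the paper's. Both proofs must show $\deg(R)=1$ when $R$ is infinite, i.e.\ that the unique indecomposable projective $P_0$ is forced into $\cal M$. The paper gets this softly: take $P_0\prec P'$ of higher power by L\"owenheim--Skolem; since $\cal P=\RFlat$ is elementary (Fact \ref{elem}(2)), $P'$ is projective, hence $P'\cong P_0^{(I)}\cong R^{(I)}$ is free, so $P_0\models T$, and (i) makes $P_0$ free, forcing $\deg(R)=1$. You instead compute invariants: pp subgroups of $P_0$ are $\mathrm{End}_R(P_0)$-submodules; $\mathrm{End}_R(P_0)\cong eRe$ (up to opposite ring, harmless for locality, cardinality and the radical) is local and, $e$ being a full idempotent, Morita equivalent to $R$, hence left perfect and right coherent, hence right artinian by Lemma \ref{n=1}(3); so $|eRe|$ is a finite power of the cardinality of its residue division ring $D$, and $R$ infinite forces $D$ infinite (cardinality transfers via $R^{\mathrm{op}}\cong M_{r_0}(\mathrm{End}_R(P_0))$); Nakayama then makes every nonzero pp quotient of $P_0$ of size at least $|D|$, so Baur--Monk gives $P_0\equiv P_0^{(\omega)}=R^{(\omega)}$, contradicting (i) if $\deg(R)\geq 2$. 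I checked this chain and it holds. It costs more ring theory (Morita theory, the structure of local right artinian rings) than the paper's two-line appeal to Fact \ref{elem}(2), but it proves more: it establishes outright that \emph{all} projectives over such a ring are elementarily equivalent, a fact the paper only records afterwards, in the Example following the theorem, as a by-product of its proof.

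One genuine, though easily repaired, slip: to get $\n(R)=1$ you claim that $P_0^{(\aleph_0)}\oplus P_1^{(\aleph_1)}$ ``has all exponents infinite, hence lies in $\cal M$''. The fact from \S\ref{dec} you are invoking applies to sums $\bigoplus_{i<\n(R)}P_i^{(\kappa_i)}$ in which \emph{every} indecomposable occurs with infinite multiplicity. If $\n(R)\geq 3$ your module omits $P_2$, and the claim is then false in general: for $R=D_0\times D_1\times D_2$ a product of division rings, the pp pair $(e_2x=x)\,/\,(x=0)$, with $e_2$ the central idempotent of the third factor, is open in $R^{(\omega)}$ but closed in $P_0^{(\aleph_0)}\oplus P_1^{(\aleph_1)}$, so the latter is not a model of $T$. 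The repair is immediate: take $\bigoplus_{i<\n(R)}P_i^{(\kappa_i)}$ with all $\kappa_i$ infinite and not all equal; this lies in $\cal M$ by \S\ref{dec}, is non-free by uniqueness of decompositions, and your contradiction with (i) goes through unchanged.
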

\begin{proof}
(1). If all modules are free, the ring cannot have proper (left) ideals. Conversely, every vector space is free.

(2). (i) and (ii) are clearly equivalent and imply (iii), for, given any module $M$, large enough powers of it are free, hence $M$ is a direct summand of a free module and thus projective; and Lemma \ref{n=1} yields $\n(R)=1$. 

If, conversely, (iii) holds, every module is a weak direct power of a single projective indecomposable, whence in any given higher power they are isomorphic, so (i) through (iii) are equivalent.

(iii)$\implies$(iv). Every module is projective iff the ring is semisimple artinian. By the Wedderburn-Artin Theorem, $R$ is a direct product of matrix rings over division rings, however 
$\n (R)=1$ implies that there is only one such, so $R$ is a simple artinian ring, hence a matrix ring over a division ring.

(iv)$\implies$(iii) is well known.

(3).(v)$\implies$(iv). If $R$ is left perfect and right coherent, flat modules are projective and these form an elementary class. Therefore, the definable subcategory generated by $\RFlat$ is $\RFlat=\cal P (=\RProj)$. We claim $\cal P$ is categorical in higher powers. By the decomposition theory of projectives over perfect rings, \S \ref{dec}, every projective is, essentially uniquely, a direct sum of (projective) indecomposables.  But by hypothesis and Lemma \ref{n=1}, there is only one, call it $P$, so every projective is isomorphic to $P^{(I)}$ for some set $I$. Therefore all projectives of power $>|P|$ are isomorphic. It remains to recall that $|P|\leq |R|$ to conclude the proof of the claim.

(iv)$\implies$(iii). Assuming (iv), every flat of higher power is free. In particular, so is every large enough weak power of the model $F\oplus B$ from Lemma \ref{flatmodel}. Then that model is projective, and by that very lemma, $R$ left perfect. As $R^I\equiv R^{(I)}$ for every infinite set $I$, such direct powers $R^I$ are in the definable subcategory generated by the flats and thus free for large enough $I$. Hence $R$ is right coherent. Then flatness=projectivity is an elementary property, hence $\cal M\subseteq \cal P$. But by assumption, $\cal P$ is also categorical in higher powers, hence $\n(R)=1$, by Lemma \ref{n=1}.

(iii)$\implies$(ii). If all models of $T$ are projective with $\n(R)=1$, then, by Lemma \ref{n=1}(1), $T$, resp., $\cal M$ is categorical in higher powers, which clearly implies also (i).

    Finally, assuming (i), we prove (v).  First of all,  all modules $R^I$ are in $\cal M$ (at least for infinite $I$, for then  $R^I\equiv  R^{(I)}\in \cal M$), hence all such modules are flat. This implies right coherence (as flatness for finite $I$ is trivial). Second, the flat model $F\oplus B$ of $T$ from Lemma \ref{flatmodel} below is also free, so $B$ is projective and hence $R$ left perfect. Then Lemma \ref{n=1} applies, whose part (1)(b) yields $\n(R)=1$, hence also, by part (4), that $R/J$ is simple.  
    
    (4).  (i)$\implies$(iii).  If all models of $T$ are free, part (\ref{2M}) applies, whose (v) and (iii) show that $R$ is left perfect and right coherent and that   $\cal M\subseteq \cal P$ and  $\n(R)=1$. Then $_RR=P^{(d)}$ with $d$ an integer, namely, $d=\deg(R)$, and every projective a weak direct power of $P$.
If $R$ is infinite, so is $P$, the unique indecomposable projective. 

We claim $P$ is a model of $T$. Using L\"owenheim-Skolem, choose  an elementary extension $P'$  of $P$ of higher power. By left perfectness and right coherence,  $\RFlat=\cal P$ is an elementary class, whence $P'$ is also projective, hence $P'\cong P^{(I)}$ for some infinite set $I$. As $I$ is infinite (and $d$ finite),  also $P'\cong {_RR}^{(I)}$, so $P'$ is free and thus a model of $T$ Consequently, $P\prec P'$ is a model as well, as claimed.

By (i), $P$ is free, hence $d=\deg(R)=1$.

 (iii)$\implies$(v). First of all, (iii) and part (\ref{2M}) make $R$ left perfect and right coherent  again. As $\n(R)=1$, $R/J$ is simple, which is all we need if $R$ is finite. If it is infinite, then by hypothesis $\deg(R)=1$. Lemma \ref{n=1}(2), $R$ is local. (3) of the same lemma then concludes the proof.
 
  (v)$\implies$(i). If $R$ is artinian, it is  left perfect and right coherent, so part (\ref{2M}) applies and yields $\cal M\subseteq \cal P$ and $\n(R)=1$. By Kaplansky's Theorem, if $R$ is local,  local $\cal P\subseteq \cal F$, so (i) follows for infinite $R$. If now $R$ is finite, we still have $\n(R)=1$ and hence $_RR=P^{(d)}$ for the unique projective indecomposable, but now any infinite projective module is an infinite weak direct power of $P$ and so automatically a  weak direct power of $_RR$, that is free. So every infinite projective is free, and we have (i).
  
  (ii) and  (v)  are equivalent by  Fact \ref{elem}(3).
  
  (iii)$\implies$(iv). By (iii), we have $_RR = P^{(\deg(R))}$ and $\deg(R)=1$ in case  $R$ is infinite. If $\deg(R)=1$, clearly all projectives are free. But by (3), the ring is left perfect and so flats are projective. If $\deg(R)>1$, the ring must be finite. But then all infinite (flats=) projectives are \emph{infinite} weak direct powers of $P$ and hence also free.
  
  Finally, we prove  (iv)$\implies$(iii). Note (4)(iv)$\implies$(3)(iv)  and thus (3)(iii). It remains to see why $\deg(R)=1$ in case  $R$ is infinite. Then $P$ itself must be infinite too. To be free we have to have $\deg(R)=1$.
\end{proof}

\begin{rem}
\begin{enumerate}[\rm(a)]
 \item 
 That `some' implies `all' in (\ref{2M}) above is an instance of a deep theorem, the celebrated Morley-Shelah Theorem (see \cite[Thm.\,12.2.1]{H} for references), however for modules one obtains it without much effort.
 \item By a theorem of Govorov, $R$ is left perfect and local if and only if \emph{all} flat modules are free, cf.\,\cite[Cor.\,5]{SE}.
\end{enumerate} 
\end{rem}

\begin{exam} Part of the proof of   \emph{(4)(i)$\implies$(iii)} shows that if $R$ is an infinite left perfect and right coherent ring with $\n(R)=1$, then $_R\flat=\cal P=\cal M$ and all projective $R$-modules are elementary equivalent, for $P\prec  _RR^{(I)}$ and $P\subseteq Q\subseteq _RR^{(I)}$ yield $Q\equiv P\equiv _RR^{(I)}\in\cal M$, which is true for any projective module $Q$ (in particular for $_RR$).

Note that in this case $\cal F= \cal M$ (i.e., all models of $T$ are free) precisely when also $\deg(R)=1$.
\end{exam}

We turn to $_R(5)$\hspace{-.1em}$_{\cal M\cap {\RFlat}}$ and proceed to the remaining two conditions in two corollaries.  Note that the equivalence of (iii) again rests  on Lemma \ref{flatmodel} further down in \S\ref{Bass1}.

By a defining feature of projective modules, the free module of infinite rank $\kappa$ is pure-universal   for ${\kappa}$-generated projectives (even  direct-summand-universal). If this is true for all flat modules instead, we have a strong ring-theoretic property.

\begin{thm}\label{perfect} 
The  following are equivalent.
 
\begin{enumerate}[\rm (i)]
\item\label{(Tiii)}  {\rm $_R(5)$\hspace{-.1em}$_{\cal M\cap {\RFlat}}$:} $R^{(\kappa)}$ is $\prec$-universal (equivalently, pure-universal) among the $\kappa$-generated flat models of  $T$, for some (every) $\kappa\geq\omega$. 
\item\label{(Tii)} $R^{(\kappa)}$ is pure-universal among $\kappa$-generated flat $R$-modules, for some (every) $\kappa\geq\omega$.
\item\label{(Ti)} $R$ is left perfect.

\end{enumerate}
One may replace pure-universal  by "direct-summand-universal". 
\end{thm}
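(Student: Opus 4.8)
The plan is to prove the cycle (iii)$\Rightarrow$(ii)$\Rightarrow$(i)$\Rightarrow$(iii), picking up the direct-summand strengthening along the way and pushing the one nontrivial step back onto Lemma~\ref{flatmodel}.

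First I would treat (iii)$\Rightarrow$(ii), together with its direct-summand version. If $R$ is left perfect then flat $=$ projective, so any $\kappa$-generated flat module $P$ is a $\kappa$-generated projective. A generating set of size $\le\kappa$ gives an epimorphism $R^{(\kappa)}\twoheadrightarrow P$, which splits by projectivity, exhibiting $P$ as a direct summand of $R^{(\kappa)}$. Hence $R^{(\kappa)}$ is direct-summand-universal---and a fortiori pure-universal---among $\kappa$-generated flats, for every $\kappa\ge\omega$.

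Next, (ii)$\Rightarrow$(i) is a restriction of the class together with the dictionary between pure and elementary embeddings. The $\kappa$-generated flat models of $T$ form a subclass of all $\kappa$-generated flats, so the pure-universality of $R^{(\kappa)}$ for the larger class specializes to the smaller one. Since $R^{(\kappa)}$ and every model of $T$ are elementarily equivalent, any pure embedding between them is automatically elementary (by the fact that elementary embeddings of modules are exactly the pure embeddings of elementarily equivalent modules); this is precisely why the $\prec$-universal and the pure-universal formulations of (i) coincide.

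The heart of the matter is (i)$\Rightarrow$(iii), which I would route through Lemma~\ref{flatmodel}: it suffices to show that every countably generated flat model $M$ of $T$ is projective. Fix the (some) $\kappa\ge\omega$ for which (i) is assumed; then $M$ is $\kappa$-generated (being countably generated, and $\omega\le\kappa$), flat, and a model of $T$, so (i) supplies a pure embedding of $M$ into the free---hence projective---module $R^{(\kappa)}$. A countably generated pure submodule of a projective is itself projective, so $M$ is projective, and Lemma~\ref{flatmodel} then yields that $R$ is left perfect; as this step uses only a single $\kappa$ while (iii) returns (ii), and hence (i), for every $\kappa$, the equivalences hold in both the `some' and the `every' reading. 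The genuine obstacle is thus invisible inside Theorem~\ref{perfect} itself and is entirely absorbed into Lemma~\ref{flatmodel}, whose hard direction---manufacturing, when $R$ fails to be left perfect, a countably generated flat model of $T$ that is not projective---is the Bass-module construction of \S\ref{Bass1}.
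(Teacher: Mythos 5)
Your proposal is correct and follows essentially the same route as the paper: perfectness gives flat $=$ projective and hence direct-summand-universality of $R^{(\kappa)}$; the fact that pure embeddings between elementarily equivalent modules are elementary handles (ii)$\Rightarrow$(i); and the converse is reduced via Lemma~\ref{flatmodel} to showing that a countably generated flat model of $T$, being a countably generated pure submodule of a projective, is projective. Your treatment of the ``some/every $\kappa$'' reading is in fact slightly more careful than the paper's (which embeds $M$ into $R^{(\omega)}$ rather than $R^{(\kappa)}$), but the argument is the same.
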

\begin{proof} (\ref{(Ti)})$\implies$(\ref{(Tii)}). Let $\kappa\geq \omega$. By (\ref{(Ti)}), every flat module is projective, hence a direct summand of a free module. But clearly every $\kappa$-generated projective module is (isomorphic to) a direct summand of $R^{(\kappa)}$, as long as $\kappa$ is infinite. 

(\ref{(Tii)})$\implies$(\ref{(Tiii)}), since pure embeddings between elementarily equivalent modules are elementary (and $T$ is complete).

(\ref{(Tiii)})$\implies$(\ref{(Ti)}). By Lemma \ref{flatmodel}, all we need is verify that every countably generated flat model of $T$ is projective. Let $M$ be such a model.  By (\ref{(Tiii)}), $M$ is purely embedded in   $R^{(\omega)}$. We claim this makes $M$ projective. It is well known that countably generated pure submodules of (even just pure-) projectives are pure-projective (see e.g.\ \cite[Prop.\,1.3.26]{P2}). But flat+pure-projective=projective, so $M$ is projective, as desired.
\end{proof}

\begin{cor}
 The  following are equivalent.
 
\begin{enumerate}[\rm (i)]
\item\label{(4iii)}  {\rm $_R(4)$\hspace{-.1em}$_{\cal M\cap {\RFlat}}$:}  $R^{(\kappa)}$ is $\prec$-universal  among the $\kappa$-generated flat models of $T$ and  $\Sigma$-pure-injective, for some (every) $\kappa\geq\omega$.  

\item\label{(4ii)}  $R^{(\kappa)}$ is pure-universal among $\kappa$-generated flat $R$-modules and  $\Sigma$-pure-injective, for some (every) $\kappa\geq\omega$. 
\item All flat $R$-modules are $\Sigma$-pure-injective.
\item\label{(4iv)}   $T$ is totally transcendental.
\item\label{(4i)}  $R$ is left $\Sigma$-pure-injective.

\end{enumerate}
\end{cor}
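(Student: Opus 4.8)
The plan is to observe that (iii), (iv) and (v) are merely three formulations of the single assertion that $_RR$ is $\Sigma$-pure-injective, and that (i) and (ii) are obtained from the respective clauses of Theorem~\ref{perfect} by conjoining exactly this assertion. Since $\Sigma$-pure-injectivity of $_RR$ already forces left perfectness (the dcc on pp subgroups specialises to the dcc on principal right ideals), the universality clauses of (i) and (ii) become automatic, so the whole corollary collapses onto (v).

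First I would settle the equivalence (iv)$\Leftrightarrow$(v). Total transcendence of $T$ means that the model $R^{(\omega)}$ is totally transcendental, i.e.\ $\Sigma$-pure-injective; and because a pp subgroup commutes with direct sums, $\phi(R^{(\omega)})=\phi(R)^{(\omega)}$, so a strictly descending chain of pp subgroups exists in $R^{(\omega)}$ iff one exists in $_RR$. Thus $R^{(\omega)}$ satisfies the dcc on pp subgroups iff $_RR$ does, and this is (v); Fact~\ref{sstt} records the same transfer of total transcendence between $M$ and $M^{(\omega)}$. The identical bookkeeping shows, for every $\kappa\geq\omega$, that $R^{(\kappa)}$ is $\Sigma$-pure-injective iff $_RR$ is, a fact I would keep on hand for (i) and (ii).

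Next I would prove (iii)$\Leftrightarrow$(v). The forward direction is immediate since $_RR$ is flat. For (v)$\Rightarrow$(iii), left perfectness (from (v)) makes flat $=$ projective, so it suffices that every projective be $\Sigma$-pure-injective. The dcc on pp subgroups passes from $_RR$ to each free module $R^{(I)}$ by the same identity $\phi(R^{(I)})=\phi(R)^{(I)}$; and a direct summand $P$ of $R^{(I)}$ gives $P^{(\omega)}$ as a summand of the pure-injective $\bigl(R^{(I)}\bigr)^{(\omega)}$, whence $P^{(\omega)}$ is pure-injective and $P$ is $\Sigma$-pure-injective. So all flats are $\Sigma$-pure-injective.

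Finally I would dispose of the two universality conditions. Each of (i) and (ii) explicitly asserts that $R^{(\kappa)}$ is $\Sigma$-pure-injective, which by the recorded transfer is precisely (v), giving (i)$\Rightarrow$(v) and (ii)$\Rightarrow$(v) for free. Conversely, assuming (v) we have $R$ left perfect, so Theorem~\ref{perfect} supplies both pure-universality of $R^{(\kappa)}$ among $\kappa$-generated flat modules and $\prec$-universality among the $\kappa$-generated flat models of $T$, while (v) itself provides the $\Sigma$-pure-injectivity of $R^{(\kappa)}$; together these are (ii) and (i). The passage from ``some $\kappa$'' to ``every $\kappa$'' is harmless, as neither left perfectness nor $\Sigma$-pure-injectivity depends on $\kappa$. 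I do not expect a genuine obstacle here: the argument is Theorem~\ref{perfect} plus the standard inheritance of the pp-subgroup dcc under direct sums and summands, the only delicate line being the transfer of the ``$\Sigma$-pure-injective'' conjunct between $R^{(\kappa)}$ and $_RR$.
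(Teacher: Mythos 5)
Your proposal is correct and takes essentially the same route as the paper: the paper's own proof is exactly to ``add $\Sigma$-pure-injective'' to each condition of Theorem~\ref{perfect}, note that this conjunct transfers between $R^{(\kappa)}$ and $_RR$ (being preserved under elementary equivalence), and observe that together with left perfectness everything collapses onto (v). The only difference is that where you derive (v)$\implies$(iii) by hand (flat $=$ projective via perfectness, dcc on pp subgroups passing to free modules and their summands), the paper simply cites this fact from \cite[Prop.\,15]{Rot4}, cf.\ \cite[Cor.\,14.12]{P1}; your argument is a correct self-contained proof of that citation.
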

\begin{proof} Just add  "$\Sigma$-pure-injective" to all three conditions of the theorem and note that this property is preserved under elementary equivalence. That even (iii) holds was noted in \cite[Prop.\,15]{Rot4}, cf.\,\cite[Cor.\,14.12]{P1}.

\end{proof}

We are left with $_R(3)$. For this first note:

\begin{lem}
 $R$ is right coherent if and only if all models of\/ $T$ are flat. \end{lem}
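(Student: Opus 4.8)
The plan is to read off both implications from two items already established: Fact \ref{elem}(1), which says that $R$ is right coherent precisely when $\RFlat$ is elementarily closed, and the (unlabelled) Fact that for infinite $I$ one has $_RR^I\equiv {_RR}^{(I)}$, so that every such power $_RR^I$ is a model of $T$ (i.e.\ lies in $\cal M$).

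For the direction that right coherence forces all models of $T$ to be flat, I would argue as follows. Free modules of infinite rank are projective, hence flat, so they lie in $\RFlat$. Any $N\in\cal M$ is by definition elementarily equivalent to such a free module. If $R$ is right coherent, then $\RFlat$ is elementarily closed by Fact \ref{elem}(1), and therefore $N\in\RFlat$; that is, every model of $T$ is flat.

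For the converse, suppose every model of $T$ is flat. For infinite $I$ the power $_RR^I$ is elementarily equivalent to ${_RR}^{(I)}$ and hence a model of $T$, so by hypothesis $_RR^I$ is flat. For finite $I$ one has $_RR^I={_RR}^{(I)}$, which is free and so trivially flat. Thus all direct powers $_RR^\kappa$ are flat, which is exactly the definition of right coherence.

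The proof is accordingly short, the real content being packaged in the cited facts, so there is no serious obstacle. The only point demanding a little care is the bookkeeping in the converse: right coherence is the statement that \emph{all} direct powers $_RR^\kappa$ are flat, and one must separate the infinite powers (which enter $\cal M$ through the elementary-equivalence Fact and so are covered by the hypothesis) from the finite ones (which are free, hence flat, for trivial reasons). Once this split is made explicit, both directions close immediately.
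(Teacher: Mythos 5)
Your proof is correct and follows essentially the same route as the paper's: the forward direction via Fact \ref{elem}(1) applied to modules elementarily equivalent to (flat) free modules, and the converse via the observation that the powers $_RR^I\equiv {_RR}^{(I)}$ for infinite $I$ are models of $T$, hence flat, which gives right coherence by Chase's characterization (the finite powers being trivially flat, as the paper also notes in its preliminaries).
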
 
\begin{proof}
Let  $R$ be right coherent. By Fact \ref{elem}(1), every module elementarily equivalent to a  free module (which is always flat)  is flat as well. Conversely, if all models of $T$ are flat, so are all direct powers $_RR^I\equiv {_RR^{(I)}}$  with infinite $I$. This is equivalent to right coherence by Chase's Theorem. 
\end{proof}

The equivalence of (iii) and (v) in the next result is due to \cite[Thm.\,3.15]{KP}. 

\begin{cor}\label{R3}

The  following are equivalent.
 
\begin{enumerate}[\rm (i)]
\item {\rm $_R(3)$:}  $T$ is totally transcendental with $\prec$-universal  model $R^{(\kappa)}$, for some (every) $\kappa\geq 2^{|L|}$.
\item $R^{(\kappa)}$ is pure-universal among $\kappa$-generated direct products of flat $R$-modules and $\Sigma$-pure-injective, for some (every)  $\kappa\geq 2^{|L|}$.
\item $R^{(\kappa)}$ is saturated, for some (every) $\kappa > |L|$.
\item All members of $\cal M$ are projective/all models of $T$ are projective.
\item $R$ is left perfect and right coherent.
\end{enumerate}
\end{cor}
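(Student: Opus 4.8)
The plan is to make the ring-theoretic condition (v)---that $R$ is left perfect and right coherent---the hub, attaching each of the model-theoretic conditions (i), (ii), (iv) to it and importing the equivalence (iii)$\Leftrightarrow$(v) from \cite[Thm.\,3.15]{KP} as indicated above.

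I would first dispose of (iv)$\Leftrightarrow$(v). For (v)$\Rightarrow$(iv): left perfectness together with right coherence makes $\cal P=\RProj$ an elementary, hence elementarily closed, class by Fact \ref{elem}(2); as every free module is projective and every model of $T$ is elementarily equivalent to a free module of infinite rank, closure under $\equiv$ forces $\cal M\subseteq\cal P$. For (iv)$\Rightarrow$(v): if all models of $T$ are projective, then the powers $_RR^I\equiv{_RR^{(I)}}$ (infinite $I$) lie in $\cal M$ and are thus projective, in particular flat, so $R$ is right coherent by Chase's Theorem; and since then every flat model of $T$ is a fortiori projective, Lemma \ref{flatmodel} yields left perfectness.

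For (i) and (ii) I would split each condition into its $\Sigma$-pure-injective clause and its universality clause. The former always produces the same output: $T$ totally transcendental means its models are $\Sigma$-pure-injective, i.e.\ $_RR$ has the dcc on pp subgroups, i.e.\ $R$ is left $\Sigma$-pure-injective, hence left perfect. The universality clauses secure right coherence through one observation: for $I$ of cardinality $|L|$ the power $_RR^I$ is at once a $\kappa$-generated direct product of flat modules and a model of $T$, since $_RR^I\equiv{_RR^{(I)}}\in\cal M$ and $|{_RR^I}|=|R|^{|L|}\leq 2^{|L|}\leq\kappa$. Whichever universality is assumed, it embeds $_RR^I$ purely into the free, hence flat, module $R^{(\kappa)}$; since a pure submodule of a flat module is flat, $_RR^I$ is flat, and as right coherence may be tested at $|L|$ (cf.\ \cite[Prop.\,1.24(ii)]{Rot1}) the ring is right coherent. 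This settles (i)$\Rightarrow$(v) and (ii)$\Rightarrow$(v).

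Conversely I would recover (i) and (ii) from (v) uniformly in $\kappa$. By the Bass--Bj\"ork Theorem a left perfect, right coherent ring is left $\Sigma$-pure-injective, so $R^{(\kappa)}$ is $\Sigma$-pure-injective and $T$ is totally transcendental; right coherence makes every direct product of flats flat and left perfectness makes flats projective, so any $\kappa$-generated direct product of flats is a $\kappa$-generated projective and hence a direct summand of $R^{(\kappa)}$---this gives (ii), indeed with direct-summand-universality. For the $\prec$-universality needed in (i) I would invoke (iii), already equivalent to (v), so that $R^{(\kappa)}$ is saturated and therefore $\prec$-universal in $\cal M$ \cite[Thm.\,10.1.6]{H}. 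The alternations ``some $\Leftrightarrow$ every $\kappa$'' I would route through the $\kappa$-independent condition (v). The step I expect to be genuinely delicate is the half of (iii)$\Leftrightarrow$(v) producing saturation: showing that over a left perfect, right coherent ring it is precisely the \emph{free} module $R^{(\kappa)}$---carrying $\kappa$ copies of each of the finitely many projective indecomposables (\S\ref{dec})---that is (isomorphic to) the unique saturated model provided by Fact \ref{tt}. This is exactly the content I am borrowing from \cite{KP}.
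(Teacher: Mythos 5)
Your handling of (i), (ii), (iv) versus (v) is correct and is essentially the paper's own argument: Fact \ref{elem}(2) and Lemma \ref{flatmodel} for (iv)$\Leftrightarrow$(v); the $\Sigma$-pure-injectivity clause giving left perfectness via the dcc; right coherence extracted from universality by purely embedding a direct power of $R$ into the flat module $R^{(\kappa)}$ (the paper uses $R^R$ with $\kappa\geq 2^{|R|}$ plus an aside that finite rings are coherent, while your $R^I$ with $|I|=|L|$ and \cite[Prop.\,1.24(ii)]{Rot1} is the same idea and even avoids that aside); and Bass--Bj\"ork + Chase + direct summands for (v)$\Rightarrow$(ii). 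The genuine divergence is in (iii) and in how you obtain the $\prec$-universality of (i). You import (iii)$\Leftrightarrow$(v) from \cite[Thm.\,3.15]{KP} as a black box and then read (i) off from saturation via \cite[Thm.\,10.1.6]{H}. The paper instead proves the saturation equivalence itself: (iii)$\Rightarrow$(ii) is Proposition \ref{weaksat} (weak saturation of a Mittag-Leffler module forces total transcendence), and for (ii)$\Rightarrow$(iii) it takes the saturated model $M$ of power $\kappa$ supplied by Fact \ref{tt}, embeds it elementarily into $R^{(\kappa)}$ by the universality in (ii), and then transfers saturation to $R^{(\kappa)}$ because the latter is pure-injective (all models of a totally transcendental theory are), quoting \cite[Prop.\,6.33]{P1}, with separate cardinal bookkeeping for $|L|<\kappa<2^{|L|}$. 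Relatedly, the paper never needs saturation for the universality clause of (i): by Theorem \ref{perfect}, left perfectness makes every $\kappa$-generated flat model of $T$ projective, hence a direct summand of $R^{(\kappa)}$, and such pure embeddings between elementarily equivalent modules are elementary; right coherence then makes all models of $T$ flat, so the restriction to flat models costs nothing. In sum, your route is logically complete but outsources exactly the step you yourself flag as delicate, making (i) and (iii) depend on \cite{KP}; the paper's route is self-contained, in effect giving an independent proof of the cited KP equivalence, and it shows that the universal-model clause of (i) is obtained far more cheaply than through saturation.
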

\begin{proof} Adding "right coherent"  to each of the conditions in the previous corollary, (v) from there becomes (v) here (cf.\,preliminaries on coherent rings). But in order to get equivalenct conditions as formulated in (i) and (ii) here, we need to make sure they do imply imply right coherence, for which if suffices to have $R^R$ flat (assume, $R$ is infinite---finite rings are coherent anyway), which is a $\kappa$-generated direct products of flat $R$-modules when $\kappa\geq |R^R|= 2^{|R|}$. This yields the equivalence of (i), (ii), and (v). 

(iv)$\implies$(v) follows by invoking the "Bass model" $F\oplus B$ from Lemma \ref{flatmodel} and the model $R^R$, as before. Fact \ref{elem}(2) implies the converse.
  
We are left with (iii). It implies (ii) (for \emph{all} $\kappa\geq 2^{|L|}$) by Proposition \ref{weaksat} (and the fact that $2^{|L|}>|L|$). For the converse, assume (ii) (for \emph{all} $\kappa\geq 2^{|L|}$)  and let $\kappa > |L|$. Then $T$ is  totally transcendental and hence  has a saturated model $M$ of power $\kappa$ by Fact \ref{tt}.  

Some care has to be taken now with the cardinals. If $\kappa\geq 2^{|L|}$, then by (ii) $M$ is elementarily embedded in $R^{(\kappa)}$. But this is pure-injective (since all models of $T$ are), and so  \cite[Prop.\,6.33]{P1} implies, $R^{(\kappa)}$ is saturated as well, as desired.

If now  $|L| < \kappa <2^{|L|}$, then  $M$ is elementarily embedded in $R^{(2^{|L|})}$, and we finish off the same way.
\end{proof}

\begin{quest} We have just proved that if $R^{(2^{|L|})}$ is a $\prec$-universal  model  of $T$, then $R$ is right coherent.
 It would be interesting to know if universality of $R^{(2^{\omega})}$ suffices. The argument given above would certainly no longer work, as there are examples of non-coherent  (necessarily uncountable) rings $R$ with $R^\omega$ flat, \cite[Beispiel 4.2]{Len}. 
 
 In such a non-coherent example, $R^{(R^R)}$ cannot be $\prec$-universal, but could there be such an example for which  $R^{(\omega)}$ nevertheless would be universal for the countably generated models?
\end{quest}

\begin{rem}
Zimmermann's example \cite[Satz 6.5]{Zim} of a commutative perfect ring that is not coherent is $\Sigma$-pure-injective, hence all flat modules are $\Sigma$-pure-injective (= totally transcendental). 
So, while $T$ has non-flat models, namely $R^R$, they are  all t.t., for $T$ is a complete theory.

Besides, this example is countable, hence $\omega$-stable, and thus has saturated, and hence universal, models in every infinite cardinality. The (proof of the) corollary shows that these can't be projective. In fact, one may reformulate the theorem in a stronger form by replacing ``The free $R$-module of some (every) infinite rank is...'' everywhere by ``there is a projective $R$-module that is...''.  
\end{rem}

This leads to

\begin{quest}
What complete theories of modules have universal models that are projective?  (And the like.)
\end{quest}

We conclude with a note on categoricity in some other classes that play a role in this work.
\begin{rem}
\begin{enumerate}[\rm(a)]
 \item \cite[Thm.\,01(2)]{M-A1}. $\RFlat$ is categorical in all higher powers iff $R$ is a matrix ring over a local ring
whose  maximal ideal is left T-nilpotent.
 \item \cite[Prop.\, 1.1]{Trl}. $\RProj$ is categorical in some higher power iff $P^{(\omega)}$ is free, for each countably generated projective module $P$. (Note, if the number $\lambda$ of isomorphism types of countably generated projectives is $ > |L|$, then "in some higher power" implies only "in all cardinals $>\lambda$".)
 \item Clearly $\cal F$ is   categorical in all higher powers.
 \end{enumerate}
\end{rem}

\section{Bass modules}
\subsection{The classical case}\label{Bass1}
A classical result of Bass says that a ring $R$ is left perfect (i.e., every flat left $R$-module is projective) precisely when $R$ has the dcc on right principal ideals, \cite[Thm.\ P]{Ba}.\footnote{T.\,Y.\,Lam says:
\emph{This switch
from left modules to right modules, albeit not new for Bass ..., is
in fact one of the inherent peculiar features of his Theorem...\,. Unfortunately,
because of this unusual switch of sides, [the] Theorem  is often misquoted in the
literature, sometimes even in authoritative sources;...\,\cite[p.\,24]{Lam}.} From the model-theoretic perspective, or likewise in the terminology of p-functors of \cite{Zim}, there is nothing unusual about this switch. In fact, there is none, if we replace \emph{right} principal ideals by  \emph{left} finitary matrix subgroups or  pp formulas  that define them---which is the thing to do as the next theorem shows. The only switch of sides then is by the (rather accidental) fact that a left pp formula defines a right ideal in any ring, which is nothing  special about perfect rings.}

We show first that the flat module Bass used for his proof is almost a model of $T$. To do so we use its 
 presentation as given in \cite{PR}. 
To this end, let $R \supseteq a_1R \supseteq a_2R \supseteq a_3R \supseteq \ldots$ be a descending chain of principal right ideals. Set $a_0=1$ and choose ring elements $b_1, b_2, b_3, \ldots$  so that $a_{n+1}= a_n b_{n+1}$  ($n\geq0$) and thus $a_n= b_1 b_2 \ldots b_n$ ($n>0$). Consider  the chain 
$A_0\to A_1\to A_2\to \ldots$ with the $A_i$, 
 copies of the module $_RR$, and connecting maps $f_i: A_i \to A_{i+1}$,  right multiplication by $b_{i+1}$. Call the direct limit, $B$, the Bass module corresponding to this chain.

Being a direct limit of flat modules, $B$ is flat (and countably generated). Bass showed that $B$ is projective if and only  
if every descending chain of principal ideals (including the one above)  stabilizes. In other words, if there is a properly descending chain of principal right  ideals, then there is a flat module, namely $B$ above, that is not projective. 

\begin{lem}\label{flatmodel}
 $R$ is left perfect if and only if every countably generated  flat model of $T$ is projective 
 (if and only if the model $R^{(\omega)}\oplus B$ is projective). 
\end{lem}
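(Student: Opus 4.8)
The plan is to establish the cycle of implications (a)$\Rightarrow$(b)$\Rightarrow$(c)$\Rightarrow$(a), where (a) is left perfectness, (b) is ``every countably generated flat model of $T$ is projective'', and (c) is ``$R^{(\omega)}\oplus B$ is projective''. The implication (a)$\Rightarrow$(b) is immediate from the definition of left perfectness, since a flat model of $T$ is in particular flat, hence projective.

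The heart of the matter, needed for (b)$\Rightarrow$(c), is to show that $R^{(\omega)}\oplus B$ is itself a model of $T$, i.e.\ that it is elementarily equivalent to the free module $R^{(\omega)}$ of infinite rank. By pp-elimination it suffices to compare Baur--Monk invariants: for all pp formulas $\psi\leq\phi$ the index $[\phi(M):\psi(M)]$ --- read in the pseudo-finite sense, so that all infinite values are identified --- must agree for $M=R^{(\omega)}\oplus B$ and $M=R^{(\omega)}$. Since pp subgroups distribute over direct sums, $\phi(R^{(\omega)}\oplus B)=\phi(R^{(\omega)})\oplus\phi(B)$, and the index of a direct sum of subgroups is the product of the two indices. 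For the free summand one has $\phi(R^{(\omega)})=\phi(R)^{(\omega)}$, so $[\phi(R^{(\omega)}):\psi(R^{(\omega)})]$ equals $1$ when $\phi(R)=\psi(R)$ and is infinite otherwise; in particular every invariant of $R^{(\omega)}$ lies in $\{1,\infty\}$. Thus the only thing that can spoil the match is a pair with $\phi(R)=\psi(R)$ but with the corresponding invariant of $B$ exceeding $1$.

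To rule this out I would use that $B=\varinjlim A_i$ with each $A_i\cong{_RR}$ and that pp formulas, being existential, commute with filtered direct limits: $\phi(B)=\varinjlim\phi(A_i)$. Hence if $\phi(R)=\psi(R)$, then $\phi(A_i)=\psi(A_i)$ at every stage, and passing to the limit gives $\phi(B)=\psi(B)$, so the $B$-invariant is $1$ as well. Consequently every invariant of $R^{(\omega)}\oplus B$ also lies in $\{1,\infty\}$ and matches that of $R^{(\omega)}$, whence $R^{(\omega)}\oplus B\equiv R^{(\omega)}$ is a model of $T$. This is exactly why $B$ alone is only ``almost'' a model: for a pair with $\psi(R)\subsetneq\phi(R)$ the index in $B$ may be finite, and the free summand $R^{(\omega)}$ is precisely what forces those indices up to infinity without disturbing the ones that must stay $1$. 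Since $R^{(\omega)}\oplus B$ is visibly countably generated and flat (a direct sum of a free and a flat module), (b) now yields its projectivity, proving (c).

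Finally, for (c)$\Rightarrow$(a) I would argue contrapositively via Bass' Theorem as quoted. If $R$ is not left perfect, there is a properly descending chain of principal right ideals, and its Bass module $B$ is then non-projective. Were $R^{(\omega)}\oplus B$ projective, $B$ would be a direct summand of a projective and hence projective, a contradiction; so $R^{(\omega)}\oplus B$ is non-projective, against (c). Reading this over an arbitrary chain shows that (c) forces every such chain to stabilize, i.e.\ $R$ is left perfect. The main obstacle is the elementary-equivalence step of the middle paragraphs; everything else is routine bookkeeping or a direct appeal to Bass' Theorem and the definition of left perfectness.
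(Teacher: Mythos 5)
Your proposal is correct and takes essentially the same approach as the paper: both arguments show $R^{(\omega)}\oplus B\models T$ by comparing Baur--Monk invariants, using that the invariants of $R^{(\omega)}$ are $1$ or infinite and that a pp pair closed in $_RR$ stays closed in the direct limit $B$ (the paper phrases this as closure of definable subcategories under direct limits), and both close the cycle by quoting Bass' Theorem for an arbitrary non-stabilizing chain. The only difference is that you spell out the invariant bookkeeping that the paper compresses into two sentences.
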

\begin{proof} To prove the lemma, it  suffices to produce a flat model of\/ $T$ that is projective if and only if  $B$ is.
We claim, $M := F \oplus B$ is such a model, where $F= R^{(\omega)}$. First of all, $M$ is flat and it is projective iff $B$ is. So it remains to see  $M\models T$.
 
 Since all the pp indices $|\phi/\psi|$ are infinite in $F$, it suffices to verify that a pp pair $\phi/\psi$ opens up in $M$ if and only if it does in $F$. For the nontrivial implication, suppose it opens up in $M$ because it opens up in $B$. Then it has to open up in $_RR$ (a fancier way of saying this is that definable subcategories---which are defined by the closing of certain pp pairs---are closed under direct limit). But if it opens up in $R$, it does so in $F$ as well, and we are done.
\end{proof}

\begin{rem}\label{rem2}
 If the Bass module $B$ \emph{is} projective,  by Eilenberg's trick, $F \oplus B \cong F$ (for any free module $F$ of infinite rank), and so, trivially,  $F \oplus B\models T$. The point of the above argument is that it is a model---whether $B$ is projective or not.
\end{rem}

\subsection{From projective to pure-projective}\label{pureproj}
The  Bass module is a particular countably generated flat module which is projective precisely when the ring has the dcc on principal right ideals. This dcc can be reformulated as the dcc on (pp subgroups defined by) pp formulas of the form $a|x$ with $a\in R$. We are going to generalize this construction to produce a countably generated module that is pure-projective precisely when the ring, or rather its module category, satisfies the dcc on certain pp formulas. This  extends Bass' Theorem to a new realm, while returning the original result as a special case.

Recall, a module is pure-projective iff  it is a direct summand of a direct sum of finitely presented  modules, whereas projective modules are direct summands of free modules, that is, of direct sums of copies of the module $_RR$. This tells us how to \texttt{purify} classical concepts and results: by simply allowing arbitrary fp modules in place of $_RR$.

To this end, start from a class $\A$ of fp modules, $\A\subseteq\Rmod$.  (We can actually work with sets, as $\Rmod$ is skeletally small: the isomorphism types form a set.) Then set $\B := \add \A$ and $\C := \lim\B$.  In the previous section we dealt with the special case $\A = \{_RR\}$, in which $\B$ becomes the class of fg projective modules and $\C$, by a classical theorem of Lazard,  that of all flat modules. And this is the route we take to generalize Thm.\,\ref{perfect}: replace $\{_RR\}$ by such a set  $\A\subseteq\Rmod$ and let the resulting classes $\B$ and $\C$ play the same role they did in the specific example.

Call a module \texttt{pure-free} if it is a direct sum of fp modules. Note the analogy: a module is (pure-) projective if and only if it is a direct summand of a (pure) free module. The role that the free module of infinite rank, $F = {_RR}^{(\omega)}$, played in the previous section, will now be taken by the pure-free module $F_\A := \bigoplus_{A\in\A} A^{(\omega)}$.

\subsection{The pure-projective case}\label{purproj} Bj\"ork strengthened Bass' Theorem by showing that the dcc on principal right ideals implies the dcc on fg right ideals. A uniform model-theoretic proof of the combined Bass--Bj\"ork result was given 
in \cite[Thm.4.2]{PR}. Its main ingredient was the description of pure-projective direct limits in terms of stabilization of certain pp formulas \cite[Lemma 3.6]{PR}. Here we extend this proof to our more general setting and thus produce a more general type of Bass module, which we use in the same fashion as in Thm.\,\ref{perfect}, \S\ref{modules}, to derive a  universality result for $F_\A = \bigoplus_{A\in\A} A^{(\omega)}$, for any given choice of $\A\subseteq\Rmod$ (and resulting $\B$, $\C$) as above, see Thm.\,\ref{pure} below.

\begin{definition}\label{Bass}
 Given a descending chain $\Gamma$   of pp formulas of fixed arity, $\phi_0\geq\phi_1\geq\phi_2\geq\ldots$, define a \texttt{Bass module $B_\Gamma$} as the direct limit of a  direct system (in fact, a chain) obtained as follows.
Choose finitely presented free realizations $(A_i, \br a_i)$ of $\phi_i$ and maps $g_i : (A_i, \br a_i) \to (A_{i+1}, \br a_{i+1})$ for all $i$, which exist because $ \br a_{i+1}$ satisfies $\phi_i$ in $A_{i+1}$. 

Consider also the corresponding maps  $f_i : A_{i}\to B_\Gamma$ and the module $F_\Gamma$ which is the direct sum of infinitely many copies of each of the $A_i$, i.e., $F_\Gamma := \bigoplus_i A_i^{(\omega)}$.
 \end{definition}
 
 Note,  $F_\Gamma$ is pure-free, hence pure-projective. Besides, it is  countably generated.
 It is not hard to see that the pp type of $f_0(\br a_0)$ is finitely generated if and only if $\Gamma$ stabilizes, i.e., if there is $i$ such that $\phi_i\leq \phi_j$ for all $j\geq i$ (uniformly so in all modules) \cite[Lemma 3.6]{PR}. This implies the second  statement of the next   proposition. The proof of the first is, mutatis mutandis, identical to the last part of the proof  of Lemma \ref{flatmodel} (just replace $_RR$ by the $A_i$).

\begin{prop}\label{Bassversion} (In the notation of Def.\ref{Bass}.)
\begin{enumerate}[\rm (1)]
\item 
 The (pure-projective) module $F_\Gamma$ is elementarily equivalent to $F_\Gamma \oplus B_\Gamma$. 
\item   If $\Gamma$ does not stabilize, $B_\Gamma$ is not Mittag-Leffler, hence not pure-projective.
\qed
 \end{enumerate}
\end{prop}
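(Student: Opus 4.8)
The plan is to treat the two clauses separately, each following a template already in the excerpt: clause (1) reproduces, with $_RR$ replaced by the finitely presented modules $A_i$, the ``opening up'' argument from the proof of Lemma~\ref{flatmodel}, while clause (2) is the promised consequence of the pp-type computation \cite[Lemma 3.6]{PR} combined with the fact, recalled in the preliminaries, that every pure-projective module is Mittag-Leffler.

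For clause (1) I would work through pp-elimination in its invariant form: two modules are elementarily equivalent exactly when, for every pp pair $\psi\leq\phi$, the indices $|\phi(M)/\psi(M)|$ agree up to the usual finite-versus-infinite truncation. Since the index of a pp pair in a direct sum is the product of the indices in the summands, it suffices to compare these invariants in $F_\Gamma$ and in $F_\Gamma\oplus B_\Gamma$, the latter being the product of the invariant in $F_\Gamma$ with that in $B_\Gamma$. The decisive point is that $F_\Gamma=\bigoplus_i A_i^{(\omega)}$ takes each $A_i$ with infinite multiplicity, so a pp pair that opens up in some $A_j$ already opens up with \emph{infinite} index in $F_\Gamma$; hence in $F_\Gamma$ every pp pair has index either $1$ (closed in all $A_i$) or infinite, and the finite-but-nontrivial case never occurs. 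Consequently the product invariant in $F_\Gamma\oplus B_\Gamma$ is infinite whenever the pair opens up in some $A_j$, matching $F_\Gamma$. The one remaining case is a pair closed in every $A_i$: then it is closed in $F_\Gamma$ (index $1$), and I would finish by observing that closing such a pair cuts out a definable subcategory, which is closed under direct limits, so the pair stays closed in $B_\Gamma=\varinjlim A_i$; the product invariant is again $1$. Thus all invariants agree and $F_\Gamma\equiv F_\Gamma\oplus B_\Gamma$.

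For clause (2), write $\br a:=f_0(\br a_0)$ for the image of the distinguished tuple in the direct limit. Since $f_i=f_{i+1}\circ g_i$ and $g_i(\br a_i)=\br a_{i+1}$, this tuple equals $f_i(\br a_i)$ for every $i$, and because pp formulas are preserved by homomorphisms and $\br a_i\in\phi_i(A_i)$, the pp type $\pp^{B_\Gamma}(\br a)$ contains every $\phi_i$. By \cite[Lemma 3.6]{PR} this pp type is precisely the set of pp formulas implied, in every module, by some $\phi_i$, and it is finitely generated if and only if a single $\phi_i$ implies all the $\phi_j$ uniformly, i.e.\ if and only if $\Gamma$ stabilizes. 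Hence, if $\Gamma$ does not stabilize, $\br a$ witnesses a pp type that is not finitely generated, so $B_\Gamma$ fails the Mittag-Leffler criterion of the preliminaries; and since every pure-projective module is Mittag-Leffler, $B_\Gamma$ cannot be pure-projective.

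The bulk of this is formal: the two index computations and the bookkeeping with the colimit maps. The only genuinely conceptual ingredient is the direct-limit closure of definable subcategories used in clause (1) to transport closedness of a pp pair from the $A_i$ to $B_\Gamma$, which is exactly the ``nontrivial implication'' already isolated in the proof of Lemma~\ref{flatmodel}. In clause (2) the real content is outsourced to \cite[Lemma 3.6]{PR}; there the main thing to confirm is that its characterization lines up with the Mittag-Leffler definition used here, namely that a finitely generated pp type is one implied, in all modules, by a single of its members.
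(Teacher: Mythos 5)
Your proposal is correct and takes essentially the same route as the paper: clause (1) is precisely the argument of Lemma~\ref{flatmodel} transplanted to the $A_i$ (indices in $F_\Gamma$ are $1$ or infinite, multiplicativity over direct sums, and direct-limit closure of definable subcategories to keep closed pairs closed in $B_\Gamma$), which is exactly what the paper does by reference. Clause (2) likewise matches the paper's proof: the pp type of $f_0(\br a_0)$ is finitely generated iff $\Gamma$ stabilizes by \cite[Lemma 3.6]{PR}, so a non-stabilizing $\Gamma$ violates the Mittag-Leffler criterion, and pure-projectives are Mittag-Leffler.
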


This implies an abstract version of the above universality result.

\begin{cor}
 If\/ $\Gamma$\/ does not stabilize, $F_\Gamma$ is not a universal model of its own theory (not even purely). \qed
\end{cor}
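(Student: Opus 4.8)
The plan is to produce a single countably generated model of $T:=\Th(F_\Gamma)$ that cannot be purely embedded in $F_\Gamma$, namely $N:=F_\Gamma\oplus B_\Gamma$. By Proposition \ref{Bassversion}(1) we have $N\equiv F_\Gamma$, so $N$ is a model of $T$; and since $F_\Gamma$ and $B_\Gamma$ are both countably generated, so is $N$, whence $|N|\leq |L|=|F_\Gamma|$ (recall $F_\Gamma$ is infinite, as $\Gamma$ does not stabilize). Thus $N$ is an admissible test module for (pure-)universality of $F_\Gamma$.

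The core of the argument is a short contradiction. Suppose toward a contradiction that $N$ embeds purely into $F_\Gamma$. Since $B_\Gamma$ is a direct summand of $N$, it is pure in $N$; purity is transitive, so the composite exhibits $B_\Gamma$ as a pure submodule of $F_\Gamma$. But $F_\Gamma=\bigoplus_i A_i^{(\omega)}$ is a direct sum of finitely presented modules, hence pure-projective, and $B_\Gamma$ is countably generated. By the fact recalled in the preliminaries that a countably generated pure submodule of a pure-projective is itself pure-projective, it would follow that $B_\Gamma$ is pure-projective. This contradicts Proposition \ref{Bassversion}(2), which asserts that $B_\Gamma$ is not even Mittag-Leffler—let alone pure-projective—once $\Gamma$ fails to stabilize. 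Hence no pure embedding $N\to F_\Gamma$ can exist, and $F_\Gamma$ is not pure-universal among the models of $T$. Finally, since $T$ is complete, every pure embedding between two of its models is elementary, so pure-universality coincides with $\prec$-universality for models of $T$; the same failure therefore shows $F_\Gamma$ is not a $\prec$-universal model of its own theory, which is the sense of the parenthetical ``not even purely''.

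Everything here except one step is bookkeeping (the cardinality estimate, elementary equivalence of $N$ to $F_\Gamma$, and the completeness of $T$). The one substantive step—and the place to be careful—is the passage from ``$N$ pure in $F_\Gamma$'' to ``$B_\Gamma$ pure-projective'': it relies on transitivity of purity to move $B_\Gamma$ inside $F_\Gamma$ as a pure submodule, and crucially on the countable generation of $B_\Gamma$, which is exactly what upgrades ``pure submodule of a pure-projective'' from merely Mittag-Leffler to genuinely pure-projective, thereby triggering the contradiction with Proposition \ref{Bassversion}(2).
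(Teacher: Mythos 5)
Your proof is correct and is essentially the paper's own argument: the corollary is stated with no separate proof precisely because it follows at once from Proposition \ref{Bassversion} in the way you describe --- $F_\Gamma\oplus B_\Gamma$ is a countably generated model of $\Th(F_\Gamma)$, and a pure embedding of it into $F_\Gamma$ would make $B_\Gamma$ a pure submodule of the pure-projective module $F_\Gamma$, contradicting Proposition \ref{Bassversion}(2); completeness of the theory then upgrades the failure of pure-universality to failure of $\prec$-universality. Two minor corrections to your commentary: the countable-generation step you call ``crucial'' is in fact dispensable, since pure submodules of Mittag-Leffler modules are Mittag-Leffler with no cardinality hypothesis, which already contradicts the ``not Mittag-Leffler'' part of Proposition \ref{Bassversion}(2); and your equality $|L|=|F_\Gamma|$ need not hold (the $A_i$ could all be finite over an uncountable ring), though the inequality actually needed, $|F_\Gamma\oplus B_\Gamma|\leq|F_\Gamma|$, is immediate because $B_\Gamma$ is a quotient of $\bigoplus_i A_i$ and $F_\Gamma$ is infinite.
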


Since direct limits of flats are flat (and  flat+pure-projective = projective), we obtain Bass' result. 

\begin{cor} (In the notation of Def.\,\ref{Bass}.)
 If the $A_i$ are flat and  $\Gamma$ does not stabilize, $B_\Gamma$ is a flat module that is  not projective. \qed
\end{cor}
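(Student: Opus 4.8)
The plan is to read off both assertions from results already in hand, with essentially no new computation. For flatness, recall from Definition \ref{Bass} that $B_\Gamma$ is the direct limit of the chain $A_0 \to A_1 \to A_2 \to \cdots$ whose connecting maps are the $g_i$. By hypothesis each $A_i$ is flat, and the class $\RFlat$ is closed under direct limits (indeed $\RFlat = \lim(\RFlat)$, as noted in the preliminaries following Lazard's Theorem), so $B_\Gamma$ is flat. This is the only place where the hypothesis ``$A_i$ flat'' enters.

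To see that $B_\Gamma$ is not projective, I would first convert the question into one about pure-projectivity. Since $B_\Gamma$ is flat, the identity flat $+$ pure-projective $=$ projective from the preliminaries on purity shows that, for this particular module, projectivity is \emph{equivalent} to pure-projectivity: one direction is the general implication that projectives are pure-projective, the other is precisely the cited fact applied to the flat module $B_\Gamma$. It therefore suffices to refute pure-projectivity. But $\Gamma$ does not stabilize, so Proposition \ref{Bassversion}(2) gives that $B_\Gamma$ is not Mittag-Leffler, hence not pure-projective. Consequently $B_\Gamma$ is flat but not projective.

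I do not expect a genuine obstacle here: the construction of $B_\Gamma$ and the key input, Proposition \ref{Bassversion}(2), are already established, so the corollary is a two-line deduction. The only point demanding care is that the hypothesis ``$A_i$ flat'' is essential --- in the general Definition \ref{Bass} the free realizations need only be finitely presented --- since it is exactly this hypothesis that makes $B_\Gamma$ flat and thereby activates the equivalence of projectivity and pure-projectivity. In the special case $\A = \{_RR\}$, where the $A_i$ are copies of $_RR$ and $\Gamma$ is the chain of formulas $a_n \mid x$ attached to a non-stabilizing descending chain of principal right ideals, this recovers Bass' original flat non-projective module.
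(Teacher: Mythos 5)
Your proposal is correct and is essentially the paper's own argument: the paper's preamble to this corollary reads ``Since direct limits of flats are flat (and flat+pure-projective = projective), we obtain Bass' result,'' which combines exactly your two ingredients --- closure of $\RFlat$ under direct limits for flatness, and Proposition \ref{Bassversion}(2) together with the identity flat $+$ pure-projective $=$ projective for non-projectivity. Nothing to add.
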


To see that this, as a matter of fact, implies  the more general Bj\"ork-Bass result as in  \cite[Thm.4.2]{PR} (see there for details), it remains to note that simple divisibility formulas of the form $r|x$ (with $r\in R$) or  $\frak{A}\,|\,\br x$ (with $\frak{A}$ a matching matrix) have
projective (hence even free) free (sic!) realizations. Simply  take the free module $A$ on generators $\br x$. Then $(A, \frak{A}\,\br x)$ is a free realization of $\frak{A}\,|\,\br x$ (in particular, $(R, r)$ is a free realization of $r|x$). (Note: we can't do  better  with projective $A$, as such  formulas  are the only ones that are freely realized in a projective,  \cite[Fact 2.8]{MPR}.)

\subsection{Universality of pure-free modules} We are now able to state the universality properties the pure-free module $F_\A = \bigoplus_{A\in\A} A^{(\omega)}$ has, for any given choice of $\A\subseteq \Rmod$ (and $\B=\add\A$ and $\C=\lim\B$).

Consider $\Gamma_\A$, the set of pp formulas of any finite arity that are freely realized in a module from $\add\A$. Note that this is the same as to say \emph{freely realized in a finite direct sum of modules from $\A$}. Let $T_\A$ be the $L$-theory of $F_\A$. For brevity, members of $\C\cap\Mod T_\A$ are simply called \texttt{$\C$-models} of $T_\A$.

(One can also assign, conversely, to any given set of pp formulas, $\Gamma$, a set $\A_\Gamma$  of fp modules that are free realizations of formulas from $\Gamma$. There is a Galois connection  to be extracted here.)

We have at once the following generalization of Lemma \ref{flatmodel}. Note that, without the converse at hand that Bass' Theorem provides for perfect rings, we are able to prove this only for countably generated modules. The question arises if this is true without the countability assumption.

\begin{lem}\label{Cmodel}
$\RMod$ has the dcc on formulas from $\Gamma_\A$ if and only if every countably generated $\C$-model of $T_\A$   is pure-projective (and it suffices to consider the model $F_\A\oplus B_{\Gamma_\A}$, with $B_{\Gamma_\A}$ as in   Def.\,\ref{Bass}).\qed
\end{lem}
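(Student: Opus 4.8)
The plan is to mirror the proof of Lemma~\ref{flatmodel} as closely as possible, since the present statement (Lemma~\ref{Cmodel}) is explicitly advertised as a generalization of it, with the set $\A=\{{}_RR\}$ replaced by an arbitrary $\A\subseteq\Rmod$. The classical case had three ingredients: (a) the equivalence between left perfectness and the dcc on principal right ideals (i.e.\ the dcc on pp~formulas $a\mid x$); (b) the fact that $F\oplus B$ is a model of $T$; and (c) the fact that $F\oplus B$ is projective iff $B$ is, combined with flat$+$pure-projective$=$projective. In the abstract setting, the role of ``left perfect'' is played by ``dcc on $\Gamma_\A$'', the role of ``projective'' by ``pure-projective'', and the role of Lemma~\ref{flatmodel}'s model $F\oplus B$ by $F_\A\oplus B_{\Gamma_\A}$. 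So I would first reduce, exactly as in Lemma~\ref{flatmodel}, to showing that it suffices to test the single model $F_\A\oplus B_{\Gamma_\A}$.

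\textbf{The forward direction.} Suppose $\RMod$ has the dcc on formulas from $\Gamma_\A$. Then any Bass module $B_\Gamma$ built from a descending chain $\Gamma$ drawn from $\Gamma_\A$ comes from a chain that stabilizes, so by Proposition~\ref{Bassversion}(2) (in its contrapositive form) $B_\Gamma$ is pure-projective; more to the point, the specific module $B_{\Gamma_\A}$ is pure-projective. I would then argue that an arbitrary countably generated $\C$-model $M$ of $T_\A$ is pure-projective as follows: since $M\equiv F_\A$ and $M\in\C=\lim\B$, every tuple of $M$ satisfies a pp~type, and the dcc on $\Gamma_\A$ forces each such type to be finitely generated (this is the Mittag--Leffler criterion recalled in the preliminaries). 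Being a countably generated Mittag--Leffler module, $M$ is pure-projective by Kaplansky-type results quoted in the Mittag--Leffler paragraph. The key auxiliary point, exactly parallel to clause~(1) of Proposition~\ref{Bassversion}, is that $F_\A\oplus B_{\Gamma_\A}$ is a genuine $\C$-model of $T_\A$: it lies in $\C$ because $F_\A\in\B\subseteq\C$ and $B_{\Gamma_\A}\in\lim\B=\C$ and $\C$ is closed under finite direct sums, and it models $T_\A$ by Proposition~\ref{Bassversion}(1).

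\textbf{The converse and the reduction.} For the converse, suppose every countably generated $\C$-model of $T_\A$ is pure-projective; I must produce the dcc on $\Gamma_\A$. I would argue contrapositively: if $\Gamma_\A$ admitted a properly descending chain $\Gamma$, then the associated Bass module $B_{\Gamma}$ would be non-Mittag--Leffler, hence not pure-projective, by Proposition~\ref{Bassversion}(2). But $F_\A\oplus B_{\Gamma}$ is a countably generated $\C$-model of $T_\A$ (again by Proposition~\ref{Bassversion}(1) and closure of $\C$ under direct sums), and it is pure-projective iff $B_{\Gamma}$ is---because $F_\A$ is pure-projective and a direct summand, and pure-projectivity passes to and from direct summands in the same way projectivity did in Lemma~\ref{flatmodel}. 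This contradicts the hypothesis. The parenthetical ``it suffices to consider $F_\A\oplus B_{\Gamma_\A}$'' is then the observation that the \emph{single} chain defining $\Gamma_\A$ (or rather the canonical witnessing chain for a failure of dcc) already carries the full information, so one need only test that one model.

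\textbf{Main obstacle.} The routine parts---closure of $\C$ under finite sums, the model-theoretic reduction, and the summand argument---are genuinely mutatis-mutandis copies of the earlier lemma, as the text promises. The one point requiring care is the interface between ``pure-projective'' and ``$\C$-model'': I must check that every countably generated $\C$-model really does fall under the Mittag--Leffler/dcc machinery, i.e.\ that membership in $\C=\lim\B$ together with $M\models T_\A$ genuinely constrains the pp~types of $M$ to those controlled by $\Gamma_\A$, so that the dcc on $\Gamma_\A$ (rather than on all pp~formulas) is exactly what is needed. Pinning down that the relevant descending chains live in $\Gamma_\A$---and not in a larger set of pp~formulas---is the step I expect to be most delicate, and it is precisely where the definition of $\Gamma_\A$ as the formulas freely realized in $\add\A$ must be used.
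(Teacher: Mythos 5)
Your overall strategy is the one the paper intends (the paper states this lemma without proof, as following ``at once'' from \S\ref{purproj}), and your converse direction is correct in substance: a non-stabilizing chain $\Gamma$ in $\Gamma_\A$ yields the $\C$-model $F_\A\oplus B_\Gamma$ of $T_\A$ whose direct summand $B_\Gamma$ is not pure-projective by Proposition~\ref{Bassversion}(2). The forward direction, however, is where your proposal has a genuine gap, in two respects. First, the inference you offer is logically invalid: the contrapositive of Proposition~\ref{Bassversion}(2) is ``$B_\Gamma$ Mittag-Leffler $\Rightarrow$ $\Gamma$ stabilizes'', whereas what you invoke is the unproved \emph{converse}, ``$\Gamma$ stabilizes $\Rightarrow$ $B_\Gamma$ pure-projective''. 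Second, the step you defer as ``most delicate''---that the dcc on $\Gamma_\A$ alone, rather than on all pp formulas, forces every pp type in a countably generated $\C$-module to be finitely generated---\emph{is} the forward direction; flagging it without executing it leaves that direction unproved. The needed argument runs as follows: if $M=\lim B_j$ with all $B_j\in\B=\add\A$, then a pp formula holds at a tuple $\bar m$ in $M$ if and only if it holds at some preimage of $\bar m$ in some $B_j$; each $B_j$ is finitely presented, so each such preimage freely realizes some pp formula, and that formula lies in $\Gamma_\A$ by the very definition of $\Gamma_\A$; hence the pp type of $\bar m$ in $M$ is the directed union of finitely generated types whose generators form a descending (directed) family in $\Gamma_\A$. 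The dcc makes this family stabilize, so the pp type is finitely generated, $M$ is Mittag-Leffler, and countably generated Mittag-Leffler modules are pure-projective. (The same argument, applied to $B_\Gamma\in\C$ itself, also repairs your first inference.)

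Two citation-level inaccuracies should be fixed as well. You write $F_\A\in\B\subseteq\C$, but $F_\A\notin\B$ in general: it is an infinite direct sum, while $\add$ permits only finite ones and direct summands; $F_\A$ lies in $\C$ as the direct limit of its finite partial sums. And Proposition~\ref{Bassversion}(1) literally gives $F_\Gamma\oplus B_\Gamma\equiv F_\Gamma$, not $F_\A\oplus B_\Gamma\equiv F_\A$; to get modelhood of $T_\A$ you must choose the free realizations $A_i$ of the $\phi_i\in\Gamma_\A$ inside $\add\A$ (possible precisely because $\phi_i\in\Gamma_\A$, and also needed for your claim that $B_\Gamma\in\C$) and rerun the pp-pair argument of Lemma~\ref{flatmodel}: a pair opening in $B_\Gamma$ opens in some $A_i$, hence in some member of $\A$, hence in $F_\A$, all of whose pp indices are infinite.
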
 

The telescoping theorem of \cite[Thm.\,3.7]{PR} yields the following generalization of Rem.\,\ref{rem2}. 

\begin{rem}\label{rem3}
 If the Bass module $B_{\Gamma_\A}$ \emph{is} pure-projective,   $F_\A \oplus B_{\Gamma_\A} \cong F_\A$. 
 \end{rem}

The desired generalization of Thm.\,\ref{perfect} now follows.

\begin{thm}\label{pure} 
In the above notation, the  following are equivalent.
 
\begin{enumerate}[\rm (i)]
\item  $F_\A^{(\omega)}$ is $\prec$-universal  (equivalently, pure-universal)  among the countably generated  models of  $T_\A$. 
\item $F_\A^{(\omega)}$ is pure-universal among  the countably generated  modules from $\C$.
\item $\RMod$ has the dcc on formulas from $\Gamma_\A$.

\end{enumerate}
One may replace pure-universal  by "direct-summand-universal".\qed
\end{thm}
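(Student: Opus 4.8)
My plan is to close the cycle $(\mathrm{ii})\Rightarrow(\mathrm{i})\Rightarrow(\mathrm{iii})\Rightarrow(\mathrm{ii})$, transcribing the proof of Theorem~\ref{perfect} with $\Add\A$, pure-projectivity, and the dcc on $\Gamma_\A$ in place of the projectives, projectivity, and left perfectness. Throughout I use that for infinite $\kappa$ one has $F_\A^{(\kappa)}\cong\bigoplus_{A\in\A}A^{(\kappa)}\in\Add\A\subseteq\C$ and $F_\A^{(\kappa)}\equiv F_\A$, so $F_\A^{(\kappa)}$ is itself a $\C$-model of $T_\A$. For $(\mathrm{ii})\Rightarrow(\mathrm{i})$ I would note that every $\C$-model of $T_\A$ is elementarily equivalent to $F_\A\equiv F_\A^{(\kappa)}$ and that a pure embedding between elementarily equivalent modules is elementary; since $T_\A$ is complete, pure-universality among the $\kappa$-generated $\C$-models is automatically $\prec$-universality. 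For $(\mathrm{i})\Rightarrow(\mathrm{iii})$ I would invoke Lemma~\ref{Cmodel}: it suffices to see that the Bass model $F_\A\oplus B_{\Gamma_\A}$ is pure-projective. By Proposition~\ref{Bassversion}(1) this is a model of $T_\A$, and it lies in $\C$ since $B_{\Gamma_\A}\in\lim\add\A$; so (i) purely embeds it into $F_\A^{(\omega)}\cong F_\A$, which is pure-free and hence pure-projective. As $B_{\Gamma_\A}$ is a countably generated pure submodule of a pure-projective, it is pure-projective, and Lemma~\ref{Cmodel} then yields the dcc.

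The substance is in $(\mathrm{iii})\Rightarrow(\mathrm{ii})$, which I would reduce to the relative Bass statement $\C\subseteq\Add\A$. Granting this, a $\kappa$-generated $M\in\C$ lies in $\Add\A$ and, by Kaplansky's theorem, decomposes as a direct sum of at most $\kappa$ countably generated members of $\Add\A$, each of which is a direct summand of $F_\A$; hence $M$ is a direct summand of $F_\A^{(\kappa)}$, giving direct-summand- (and in particular pure-) universality.

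To establish $\C\subseteq\Add\A$ from the dcc I would argue in two stages. First, the dcc forces every $M\in\C$ to be Mittag-Leffler: writing $M=\lim B_i$ with $B_i\in\add\A$, any tuple with non-finitely-generated pp type would yield, via free realizations of the pp formulas successively generated by its images in the $B_i$, a non-stabilizing descending chain inside $\Gamma_\A$ --- the contrapositive of the mechanism behind Proposition~\ref{Bassversion}(2) and \cite[Lemma 3.6]{PR}. Second, a Mittag-Leffler member of $\C$ is pure-projective, and then it lands in $\Add\A$ for free: the canonical map $\bigoplus_i B_i\to M=\lim B_i$ is a pure epimorphism, so pure-projectivity splits it and presents $M$ as a direct summand of $\bigoplus_i B_i\in\Add\A$.

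The hard part is this second stage for uncountably generated $M$. For countably generated $M$ it is immediate, since a countably generated Mittag-Leffler module is pure-projective --- which is essentially the content of Lemma~\ref{Cmodel}. The passage to arbitrary $\kappa$ calls for a Kaplansky/Hill-type assembly: filter the Mittag-Leffler module $M$ by countably generated pure submodules (each again Mittag-Leffler, hence pure-projective and in $\Add\A$) and show that the filtration splits. The delicate step --- trivial in the flat case of Theorem~\ref{perfect}, where $\C=\RFlat$ is closed under pure submodules --- is to guarantee that these countably generated pure filtrands still lie in $\C=\lim\add\A$; securing that closure (or bypassing it) is where the genuine work of the generalization sits.
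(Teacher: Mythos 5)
Two of your three legs coincide with the paper's (implicit) argument and are correct: (ii)$\Rightarrow$(i) via completeness of $T_\A$ and the fact that pure embeddings between elementarily equivalent modules are elementary, and (i)$\Rightarrow$(iii) via the Bass model, Proposition~\ref{Bassversion}, and the fact that countably generated pure submodules of pure-projectives are pure-projective. Your reduction of (iii)$\Rightarrow$(ii) to the inclusion $\C\subseteq\Add\A$, and your first stage (the dcc forces every member of $\C$ to be Mittag-Leffler, by stabilization of the free-realization generators along a direct limit presentation) are also right; this is exactly the mechanism of \cite[Lemma 3.6]{PR}. The genuine gap is the one you flag yourself: your second stage, that a Mittag-Leffler member of $\C$ is pure-projective, is not established for uncountably generated modules, and as a standalone implication it is simply false --- for $\A=\{\mathbb{Z}\}$ the Baer--Specker group $\mathbb{Z}^\omega$ lies in $\C$ (flat) and is Mittag-Leffler but not projective, hence not pure-projective. (Of course the dcc fails over $\mathbb{Z}$; the point is that Mittag-Lefflerness of $M$ alone can never close the argument, so the dcc must be invoked a \emph{second} time, and not only on $M$ itself.)

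Moreover, your proposed repair aims at the wrong closure property. You do not need the countably generated layers of a filtration to lie in $\C$ (indeed $\C$ need not be closed under pure submodules); what you need is that $\C=\lim\add\A$ \emph{is} closed under pure-epimorphic images: if $N$ is pure in $M\in\C$ and $F$ is finitely presented, then any $F\to M/N$ lifts to $M$ (this is precisely pure-exactness) and then factors through $\add\A$, so $M/N\in\C$. Granting this, build a continuous chain of pure submodules $M_\beta$ of $M$ with countably generated layers: each quotient $M/M_\beta$ is again in $\C$, hence Mittag-Leffler by your first stage; the layer $M_{\beta+1}/M_\beta$ is then a countably generated pure submodule of the Mittag-Leffler module $M/M_\beta$, hence pure-projective; so each pure-exact sequence $0\to M_\beta\to M_{\beta+1}\to M_{\beta+1}/M_\beta\to 0$ splits and, by continuity, $M$ is the direct sum of its layers, hence pure-projective. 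Splitting the canonical pure epimorphism from a direct sum of members of $\add\A$ onto $M$ then puts $M$ in $\Add\A$, and your Kaplansky argument finishes (iii)$\Rightarrow$(ii). To be fair to you, the paper never writes this step out either: Theorem~\ref{pure} and Lemma~\ref{Cmodel} are stated without separate proof, as transcriptions of the flat case, where this very step was Bass' Theorem quoted as a black box --- yet the paper cannot quote it here, since Bass' and Simson's theorems are claimed as corollaries. So you have correctly located the one non-routine point of the generalization, but as written your proposal does not yet prove it.
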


Thm.\,\ref{perfect} and Bass'  Theorem constitute the  special case where $\A = \{_RR\}$ (cf.\ proof of Lemma \ref{flatmodel}).
In the other extreme case, where $\A$ is all of $\Rmod$, we obtain
 another classical result. In that case, clearly $\B$ is the class of (all) pure-projective $R$-modules, while, by a classical theorem of Lazard, $\C=\RMod$. Clearly, $\Gamma_{\Rmod}$ is the set of all pp formulas. If all modules are Mittag-Leffler, all pp chains must stabilize. This had been derived by the second author, \cite[Cor.\,3.5]{Rot1.5}, from his model-theoretic characterization of Mittag-Leffler modules---Daniel Simson kindly informed him that he had proved this more than a decade prior to  that, \cite{Sim}. See \cite[\S4.5.1]{P2} for more on pure-semisimple rings.

\begin{cor}{\rm (D.\ Simson)}.\label{Sim}
The following are equivalent for any ring $R$.
\begin{enumerate}[\rm (i)]
 \item $R$ is left pure-semisimple, i.e., every left $R$-module is pure-projective.
 \item Every (left) $R$-module is $\Sigma$-pure-injective (i.e., totally transcendental).
 \item Every  (left) $R$-module is Mittag-Leffler. 
 \end{enumerate}
 \end{cor}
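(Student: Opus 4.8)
The plan is to run the cycle (i) $\Rightarrow$ (iii) $\Rightarrow$ ($\ast$) $\Rightarrow$ (ii) $\Rightarrow$ (i), where ($\ast$) is the auxiliary condition that $\RMod$ has the dcc on \emph{all} pp formulas. Note that ($\ast$) is exactly condition (iii) of Thm.\,\ref{pure} in the extreme case $\A=\Rmod$, since then $\Gamma_{\Rmod}$ is the set of all pp formulas; so routing the argument through ($\ast$) is what ties the corollary to the generalized Bass machinery of \S\ref{purproj}. Two of the four links are purely formal. The implication (i) $\Rightarrow$ (iii) is immediate, because every pure-projective module is Mittag--Leffler (as recalled in the preliminaries). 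And (ii) $\Rightarrow$ (i) is the standard purity argument: $\Sigma$-pure-injectivity of every module makes every module pure-injective, so for an arbitrary module $P$ any pure-exact sequence $0\to M\to N\to P\to 0$ splits (its kernel $M$ being pure-injective), which is precisely the statement that $P$ is pure-projective.

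The one genuinely model-theoretic link is (iii) $\Rightarrow$ ($\ast$), and this is where I would put the generalized Bass module to work. Given any descending chain $\Gamma\colon \phi_0\geq\phi_1\geq\phi_2\geq\ldots$ of pp formulas of fixed arity, I form the Bass module $B_\Gamma$ as in Def.\,\ref{Bass}. By hypothesis (iii), $B_\Gamma$ is Mittag--Leffler; so by the contrapositive of Prop.\,\ref{Bassversion}(2) the chain $\Gamma$ must stabilize. Since $\Gamma$ was an arbitrary descending chain of pp formulas, $\RMod$ has the dcc on all pp formulas, i.e.\ ($\ast$) holds. The remaining care here is only that Prop.\,\ref{Bassversion}(2) is now being applied to an \emph{arbitrary} descending chain of pp formulas rather than to one coming from principal right ideals---which is exactly the generality gained in \S\ref{purproj}.

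The last link, ($\ast$) $\Rightarrow$ (ii), descends from the uniform dcc to a pointwise one. Fix a module $M$ and a descending chain of pp subgroups $\phi_0(M)\supseteq\phi_1(M)\supseteq\ldots$. Passing to the cumulative conjunctions $\psi_n:=\phi_0\wedge\cdots\wedge\phi_n$, which are pp, descend uniformly (as functors), and satisfy $\psi_n(M)=\phi_n(M)$ because the original chain descends in $M$, I obtain a descending chain of pp formulas. By ($\ast$) this chain stabilizes as functors, hence stabilizes on $M$, so $\phi_n(M)$ stabilizes. Thus $M$ has the dcc on pp subgroups, i.e.\ $M$ is $\Sigma$-pure-injective (= totally transcendental); as $M$ was arbitrary, (ii) follows. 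The main obstacle is the step (iii) $\Rightarrow$ ($\ast$): every other link is a formal manipulation of pp subgroups or a direct appeal to the definition of purity, whereas there the whole weight of the argument rests on the construction and analysis of the Bass module, already carried out in Prop.\,\ref{Bassversion}(2).
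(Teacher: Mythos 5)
Your proof is correct and follows essentially the same route as the paper: the paper's (terse) argument is precisely the extreme case $\A=\Rmod$ of the generalized Bass machinery, where $\Gamma_{\Rmod}$ is the set of all pp formulas and the key step (iii) $\Rightarrow$ dcc on all pp chains comes from Prop.\,\ref{Bassversion}(2) applied to the Bass module $B_\Gamma$. You merely spell out the formal links (i) $\Rightarrow$ (iii), dcc $\Rightarrow$ (ii) via cumulative conjunctions, and (ii) $\Rightarrow$ (i) via splitting of pure-exact sequences, which the paper leaves implicit.
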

 \begin{proof}
 Only the extension to uncountably generated modules has to be verified. This, however, follows from the fact that the above dcc ensures that \emph{all} (not only the countably generated) modules are ($\Sigma$-) pure-injective, hence all are pure-projective as well.
\end{proof}
 
\begin{rem}
 If a class of modules, $\cal K$, has a pure-universal module which is projective, then every countably generated module in $\cal K$ is projective as well.
\end{rem}
\begin{proof}
 Let $M\in\cal K$ be countably generated and $P\in\cal K$ projective and pure-universal for $\cal K$. Then $M$ is purely embedded in $P$, hence flat and Mittag-Leffler (as so is $P$). Being countably generated, $M$ is in fact (flat and) pure-projective, hence projective.
\end{proof}

\noindent
Anand Pillay

Department of Mathematics University of Notre Dame Notre Dame IN 46556

USA

e-mail: Anand.Pillay.3@nd.edu\\

\noindent
Philipp Rothmaler

Department of Mathematics, The CUNY  Graduate Center, New York, NY 10016

USA

e-mail: philipp.rothmaler@bcc.cuny.edu

\end{document}